\newtheorem{priteo}{Theorem}
\newtheorem{lema}{Lemma}
\newtheorem{coro}{Corollary}
\newtheorem{defi}{Definition}
\newtheorem{prop}{Proposition}
\newenvironment{proof}[1][Proof]{\textbf{#1.} }{\ \rule{0.5em}{0.5em}}
\newcommand{\RR}{{\rm\bf R}}
\newcommand{\seta}{\longrightarrow}
\newcommand{\dpt}{\displaystyle}
\newcommand{\fix}{{\rm Fix}}
\newcommand{\one}{\mathbbm{1}}
\journal{Journal of Mathematical Analysis and Applications}
\begin{document}

\begin{frontmatter}
\title{Periodic solutions in an array of coupled FitzHugh-Nagumo cells}
\markboth{ Torus \today}{ Torus \today}

\author{Isabel Salgado Labouriau}
\address{Centro de Matem\'atica da Universidade do Porto.\\ Rua do Campo Alegre 687, 4169-007 Porto, Portugal}
\author{Adrian C. Murza}

\address{Departamento de Matem\'atica, Faculdade de Ci\^encias\\
Universidade do Porto.\\ Rua do Campo Alegre 687, 4169-007 Porto, Portugal}

\date{\today}
\begin{abstract}
We analyse the dynamics of an array of $N^2$ identical cells coupled in the shape of a torus.
Each cell is a 2-dimensional ordinary differential equation of FitzHugh-Nagumo type
and the total system is $\mathbb{Z}_N\times\mathbb{Z}_N-$symmetric.
The possible  patterns  of oscillation, compatible with the symmetry, are described.
The types of patterns that effectively arise through Hopf bifurcation are shown to depend on the signs of the coupling constants,  under conditions ensuring that the equations have only one equilibrium state.
\end{abstract}
\begin{keyword}
equivariant dynamical system\sep ordinary differential equation  \sep FitzHugh-Nagumo \sep periodic solutions \sep Hopf bifurcation \sep  coupled cells.

\MSC[2010] 37C80 \sep 37G40 \sep 34C15 \sep 34D06

\end{keyword}

\end{frontmatter}

\maketitle

\section{Introduction}
Hopf bifurcation has been intensively studied in equivariant dynamical systems in the recent years from both theoretical and applied points of view. Stability of equilibria, synchronization of periodic solution and in general oscillation patterns, stability of the limit cycles that arise at the bifurcation point are among the phenomena whose analysis is related to the Hopf bifurcation in these systems. Periodic solutions arising in systems with dihedral group symmetry were studied by Golubitsky et al. \cite{GS88} and Swift \cite{GSdihedral},
 Dias and Rodrigues  \cite{Dias_symm} dealt with the symmetric group, 
 Sigrist  \cite{Sigrist} with the orthogonal group, to cite just a few of them. 
 Dias {\em et al.}  \cite{Dias_int} studied  periodic solutions in coupled cell systems with internal symmetries, while Dionne extended the analysis to Hopf bifurcation in equivariant dynamical systems with wreath product \cite{Dionne_wreath} and direct product groups \cite{Dionne_direct}.
The general theory of patterns of oscillation arising in systems with abelian symmetry was developed by Filipsky and Golubitsky \cite{AbelianHopf}.
The dynamical behavior of 1-dimensional ordinary differential equations coupled in a square array,  of arbitrary size $(2N)^2$, with the symmetry
$\mathbf{D}_4\dotplus\left(\mathbb{Z}_N\times\mathbb{Z}_N\right)$, was studied by Gillis and  Golubitsky \cite{Gillis}.

In this paper we use a similar idea to that of \cite{Gillis} to describe arrays of  $N^2$ cells where each cell is represented by a subsystem that is a  2-dimensional differential equation of FitzHugh-Nagumo  type.
We are interested in the periodic solutions arising at  a first Hopf bifurcation from the fully synchronised equilibrium.
To each equation in the array we add a coupling term that describes how each cell is affected by its neighbours.
The coupling may be associative, when it tends to reduce the difference between consecutive cells, 
or dissociative, when differences are increased.
For associative coupling we find, not surprisingly, bifurcation into a stable periodic solution where all the cells are synchronised with  identical behaviour. 

When the coupling is dissociative in either one or both directions, the first Hopf bifurcation gives rise to rings of $N$ fully synchronised cells.
All the rings oscillate with the same period, with a $\frac{1}{N}$-period phase shift between rings.
When there is one direction of associative coupling, the synchrony rings are organised along it.
Dissociative coupling in both directions yields rings organised along the diagonal.
The stability of these periodic solutions was studied numerically and were found to be unstable for small numbers  of cells, stability starts to appear at $N\geqslant 11$.

For all types of coupling, there are further Hopf bifurcations, but these necessarily yield unstable solutions.

\medbreak

This paper is organised as follows.
The equations are presented in section~\ref{onetorus_theor} together with their symmetries.
Details about the action of the symmetry group $\mathbb{Z}_N\times\mathbb{Z}_N$ are summarised in
section~\ref{sectionSymmetry}:
we identify the $\mathbb{Z}_N\times\mathbb{Z}_N-$irreducible subspaces of
$\mathbb{R}^{2N^2}$; the isotypic components; isotropy subgroups and their fixed point subspaces  for this action. This allows, in section~\ref{SecSubgroups}, the study of the Hopf bifurcation with symmetry
$\mathbb{Z}_N\times\mathbb{Z}_N$, applying the abelian Hopf bifurcation theorem  \cite{AbelianHopf} to identify the symmetries of the branch of small-amplitude peridic solutions that may bifurcate from equilibria.
In section~\ref{Stability} we derive the explicit expression of the $2N^2$ eigenvectors and eigenvalues of the system linearised about the origin.
Next, in section~\ref{secBifciszero} we perform a detailed analysis on the Hopf bifurcation
by setting a parameter $c$ to zero.
In this case the FitzHugh-Nagumo equation reduce to  a Van der Pol-like equation. 
Finally, in section~\ref{seccsmall}, we characterise the bifurcation conditions for $c>0$ small.

\section{Dynamics  of FitzHugh-Nagumo coupled in a torus
and its symmetries}\label{onetorus_theor}

The building-blocks of our square array are the following $2-$dimensional ordinary differential equations of
FitzHugh-Nagumo (FHN) type
\begin{equation}\label{FHN}
\begin{array}{ll}
        \dot{x}=x\left(a-x\right)\left(x-1\right)-y&=f_1(x,y)\\
        \dot{y}=bx-cy&=f_2(x,y)
    \end{array}
\end{equation}
where $a,b,c\geqslant 0$.
Consider a system of $N^2$ such equations, coupled as a discrete torus:
\begin{equation}\label{nFHN_2d}
    \begin{array}{l}
        \dot{x}_{\alpha,\beta}=
        x_{\alpha,\beta}\left(a-x_{\alpha,\beta}\right)\left(x_{\alpha,\beta}-1\right)
        -y_{\alpha,\beta}+\gamma (x_{\alpha,\beta}-x_{\alpha+1,\beta})+\delta (x_{\alpha,\beta}-x_{\alpha,\beta+1})\\
        \dot{y}_{\alpha,\beta}=bx_{\alpha,\beta}-cy_{\alpha,\beta}
    \end{array}
\end{equation}
where $\gamma\ne\delta$ and $1\leqslant\alpha\leqslant N$,  $1\leqslant\beta\leqslant N$, with both
$\alpha$ and $\beta$ computed $\pmod{N}$. 
When either $\gamma$ or $\delta$ is negative, we say that the coupling is {\em associative}: the coupling term tends to reduce the difference to the neighbouring cel, 
otherwise we say the coupling is {\em dissociative}.
We restrict ourselves to the case where $N\geqslant3$ is prime. 

The coupling structure in \eqref{nFHN_2d} is similar, but not identical, to the general case studied by
Gillis and  Golubitsky \cite{Gillis}.
There are two main differences: first, they consider an arbitrary {\em even} number of cells, whereas we study a prime number of cells. 
The second difference is that cells in \cite{Gillis} are bidirectionally coupled, and the coupling in \eqref{nFHN_2d} is unidirectional. 
These differences will be reflected in the symmetries of \eqref{nFHN_2d}.

The first step in our analysis consists in describing the symmetries of (\ref{nFHN_2d}).
Our phase space is
$$
\mathbb{R}^{2N^2}=\left\{\left(x_{\alpha,\beta},y_{\alpha,\beta}\right)|~~1\leqslant \alpha,\beta\leqslant N,~x_{\alpha,\beta},y_{\alpha,\beta}\in\mathbb{R}\right\}
$$
and (\ref{nFHN_2d}) is equivariant under the cyclic permutation of the collumns in the squared array:
\begin{equation}\label{ActionGamma1}
        \gamma_1 (x_{1,\beta},\ldots,x_{N,\beta};y_{1,\beta},\ldots,y_{N,\beta})=
        (x_{2,\beta},\ldots,x_{N,\beta},x_{1,\beta};y_{2,\beta},\ldots,y_{N,\beta},y_{1,\beta})
\end{equation}
as well as under the cyclic permutation of the rows in the squared array:
\begin{equation}\label{ActionGamma2}
        \gamma_2 (x_{\alpha,1},\ldots,x_{\alpha,N};y_{\alpha,1},\ldots,y_{\alpha,N})=
        (x_{\alpha,2},\ldots,x_{\alpha,N},x_{\alpha,1};y_{\alpha,2},\ldots,y_{\alpha,N},y_{\alpha,1}).
\end{equation}
Thus, the symmetry group of (\ref{nFHN_2d}) is the group generated by $\gamma_1$ and $\gamma_2$ denoted
$\mathbb{Z}_N\times\mathbb{Z}_N=\langle\gamma_1,\gamma_2\rangle$.
Note that, since the coupling in \eqref{nFHN_2d} is unidirectional and since the coupling constants 
$\gamma$ and $\delta$ are not necessarily equal, there is no addtional symmetry, like the $\mathbf{D}_4$ in \cite{Gillis}.
Indeed, we will show that the case $\gamma=\delta$ is degenerate.
We will use the notation
$\gamma_1^r\cdot\gamma_2^s\in\mathbb{Z}_N\times\mathbb{Z}_N$ as $(r,s)=\gamma_1^r\cdot\gamma_2^s.$
We refer to the system \eqref{nFHN_2d} in an abbreviated form as either $\dot{z}=f(z)$,
$z=\left(x_{\alpha,\beta},y_{\alpha,\beta}\right)$ or $\dot{z}=f(z,\lambda)$ where $\lambda\in\mathbb{R}$
is a bifurcation parameter to be specified later.
The compact Lie group $\mathbb{Z}_N\times\mathbb{Z}_N$ acts linearly on $\mathbb{R}^{2N^2}$
and $f$  commutes with it (or is $\mathbb{Z}_N\times\mathbb{Z}_N-$equivariant).

We start  by recalling some definitions from \cite{GS88} adapted to our case.

The isotropy subgroup $\Sigma_z$ of $\mathbb{Z}_N\times\mathbb{Z}_N$ at a point $z\in\mathbb{R}^{N^2}$ is defined to be
$$
\Sigma_z=\left\{(r,s)\in\mathbb{Z}_N\times\mathbb{Z}_N:(r,s)\cdot z=z\right\}.
$$

Moreover, the fixed point subspace of a subgroup $\Sigma\in\mathbb{Z}_N\times\mathbb{Z}_N$ is
$$
\mathrm{Fix}\left(\Sigma\right)=\left\{z\in\mathbb{R}^{2N^2}:(r,s)\cdot z=z,\ \forall (r,s)\in\Sigma\right\}
$$
and $f\left(\mathrm{Fix}\left(\Sigma\right)\right)\subseteq\mathrm{Fix}\left(\Sigma\right)$.

\begin{defi}\label{defi irred}
Consider a group $\mathbf{\Gamma}$
acting linearly on
$\mathbb{R}^{n}.$ Then
\begin{enumerate}
\item A subspace $\mathbf{V}\subseteq\mathbb{R}^{n}$ is said {\em $\mathbf{\Gamma}-$invariant}, if $\sigma\cdot v\in \mathbf{V},~\forall\sigma\in\mathbf{\Gamma},~\forall v\in \mathbf{V};$
\item  A subspace $\mathbf{V}\subseteq\mathbb{R}^{n}$ is said {\em $\mathbf{\Gamma}-$irreducible} if it is $\mathbf{\Gamma}-$invariant and if the only $\mathbf{\Gamma}-$invariant subspaces of $\mathbf{V}$ are $\left\{\mathbf{0}\right\}$ and $\mathbf{V}$.
\end{enumerate}
\end{defi}

\begin{defi}
Suppose a group $\Gamma$ acts on two vector spaces $\mathbf{V}$ and $\mathbf{W}$.
We say that $\mathbf{V}$ is $\Gamma$-isomorphic to $\mathbf{W}$ if there exists a linear isomorphism
$A:\mathbf{V}\seta \mathbf{W}$ such that $A(\sigma x)=\sigma A(x)$ for all $x\in \mathbf{V}$.
If  $\mathbf{V}$ is {\em not} $\Gamma$-isomorphic to $\mathbf{W}$ we say that they are distinct representations of
$\Gamma$.
\end{defi}

\section{The $\mathbb{Z}_N\times\mathbb{Z}_N$ action}\label{sectionSymmetry}

The action of $\Gamma=\mathbb{Z}_N\times\mathbb{Z}_N$ is identical in the $x_{\alpha,\beta}$ and the $y_{\alpha,\beta}$ coordinates, i.e.
 $\Gamma$ acts diagonally,
$\gamma(x,y)=(\gamma x,\gamma y)$ in $\mathbb{R}^{2N^2}$ for $x,y\in \mathbb{R}^{N^2}$.
Hence,
instead of taking into account the whole set of $\left(x_{\alpha,\beta},y_{\alpha,\beta}\right)\in\mathbb{R}^{2N^2},$ we will partition it into two subspaces,
$\mathbb{R}^{N^2}\times\{\mathbf{0}\}$ and $\{\mathbf{0}\}\times\mathbb{R}^{N^2}$,
namely $x_{\alpha,\beta}\in\mathbb{R}^{N^2}$ and $y_{\alpha,\beta}\in\mathbb{R}^{N^2}$.
The action of $\mathbb{Z}_N\times\mathbb{Z}_N$ on $\mathbb{R}^{N^2}$ has been studied by Gillis and Golubitsky in~\cite{Gillis} for $N=2n$, we adapt their results to our case.

Let $\mathbf{k}=\left(k_1,k_2\right)\in\mathbb{Z}^2$ and consider the subspace
${\mathbf{V_k}}\subset\mathbb{R}^{N^2}$, where $ \left( x_{\alpha,\beta} \right)\in \mathbf{V_k}$ if and only if
\begin{equation}\label{irreducible_space}
x_{\alpha,\beta} =
{\mathrm{Re}\left(z\exp\left[\frac{2\pi i}{N}\left(\alpha,\beta\right)\cdot\mathbf{k}\right]\right)\in\mathbb{R}^{N^2}}
\ {z\in\mathbb{C},\ 1\leqslant \alpha,\beta\leqslant N} \ .
\end{equation}

\begin{prop}\label{PropSubspaces}
Consider the action of $\mathbb{Z}_N\times\mathbb{Z}_N$  on $\mathbb{R}^{N^2}$ given
in~\eqref{ActionGamma1} and \eqref{ActionGamma2} with $N$ prime and let $I$ be the set of indices $\mathbf{k}=\left(k_1,k_2\right)$ listed in Table~\ref{table 1}.
Then for $\mathbf{k}\in I$ we have
\begin{enumerate}
\item\label{prop irred}
$\mathrm{dim}\mathbf{V_k}=2$ except for  $\mathbf{k}=\mathbf{0},$ where $\mathrm{dim}\mathbf{V_{0}}=1$.
\item\label{lemma_invariance}
 Each $\mathbf{V_k}$ defined in \eqref{irreducible_space} is
 $\mathbb{Z}_N\times\mathbb{Z}_N-$invariant and  $\mathbb{Z}_N\times\mathbb{Z}_N-$irreducible.
 \item\label{lemma_nonisomorphic}
The subspaces $\mathbf{V_k}$ are all distinct $\mathbb{Z}_N\times\mathbb{Z}_N$ representations.
\item\label{eqproofLema1b}
The group element $\left(r,s\right)$ acts on $\mathbf{V_k}$ as a rotation:
$$
\left(r,s\right)\cdot{z}=\mathrm{exp}\left[\frac{2\pi i}{N}\left(r,s\right)\cdot \mathbf{k}\right]z .
$$
\item\label{eqLema1}
The subspaces $\mathbf{V_k}$ verify
$\bigoplus_{\mathbf{k}\in I}\mathbf{V_k}=\mathbb{R}^{N^2}$.
\item\label{IsotropySubgroups}
The non-trivial isotropy subgroups for  $\mathbb{Z}_N\times\mathbb{Z}_N$ on $\mathbb{R}^{N^2}$ are
 $\widetilde{\mathbb{Z}}_N\left(r,s\right)$, the subgroups generated by one element
$\left(r,s\right)\neq \left(0,0\right)$.
\item\label{FixedPointSubspaces}
If $\left(r,s\right)\neq \left(0,0\right)$ then
$$
\fix\left(\widetilde{\mathbb{Z}}_N\left(r,s\right)\right)=\sum_{\stackrel{\mathbf{k}\cdot(r,s)=0}{\pmod{N}}}\mathbf{V_k}
\qquad\mbox{and}\qquad
\dim\fix\left(\widetilde{\mathbb{Z}}_N\left(r,s\right)\right)=N.
$$
\end{enumerate}
\end{prop}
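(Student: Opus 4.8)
The unifying device throughout is to parametrise each $\mathbf{V_k}$ by the single complex number $z$ of \eqref{irreducible_space}, so that $\mathbf{V_k}$ becomes a copy of $\mathbb{C}\cong\mathbb{R}^2$, and to compute the group action once in this coordinate. Since \eqref{ActionGamma1}--\eqref{ActionGamma2} give $((r,s)x)_{\alpha,\beta}=x_{\alpha+r,\beta+s}$, substituting into \eqref{irreducible_space} and factoring the exponential shows that $(r,s)$ replaces $z$ by $\exp[\frac{2\pi i}{N}(r,s)\cdot\mathbf{k}]\,z$; this is exactly item~\ref{eqproofLema1b}, and it simultaneously proves that $\mathbf{V_k}$ is invariant. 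For item~\ref{prop irred} I would split $z=u+iv$ to exhibit $\mathbf{V_k}$ as the real span of the vectors $(\cos\theta_{\alpha,\beta})$ and $(\sin\theta_{\alpha,\beta})$, where $\theta_{\alpha,\beta}=\frac{2\pi}{N}(\alpha k_1+\beta k_2)$. For $\mathbf{k}=\mathbf{0}$ these are the constant vector and $0$, giving dimension one; for $\mathbf{k}\neq\mathbf{0}$ primality enters for the first time, since then some $k_i$ is invertible $\pmod N$, the phases $\theta_{\alpha,\beta}$ run over all of $\frac{2\pi}{N}\ZN$, and for $N\geqslant3$ they are not all congruent $\pmod\pi$, which forbids any relation $\lambda\cos\theta_{\alpha,\beta}+\mu\sin\theta_{\alpha,\beta}\equiv0$ and forces dimension two.

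Irreducibility (item~\ref{lemma_invariance}) then follows from the rotation formula: choosing $(r,s)$ with $(r,s)\cdot\mathbf{k}\not\equiv0\pmod N$, which is possible precisely because $\mathbf{k}\neq\mathbf{0}$ and $N$ is prime, makes $(r,s)$ act on $\mathbf{V_k}\cong\mathbb{C}$ as a rotation through a nonzero multiple of $\frac{2\pi}{N}$, and such a rotation fixes no real line. For item~\ref{lemma_nonisomorphic} I would use that a $\Gamma$-isomorphism between two planar rotation representations must intertwine the rotations, which is possible only if their angles coincide for every group element up to one global sign (the reflection realising complex conjugation); evaluating at $(r,s)=(1,0)$ and $(0,1)$ yields $\mathbf{k}'\equiv\pm\mathbf{k}\pmod N$, so distinct entries of $I$, which by construction contains one representative of each pair $\{\mathbf{k},-\mathbf{k}\}$, give non-isomorphic representations. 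Item~\ref{eqLema1} is then a dimension count: distinct isotypic components meet only in $\{\mathbf{0}\}$ and so sum directly, while $1+2\cdot\frac{N^2-1}{2}=N^2$ shows the $\mathbf{V_k}$ exhaust $\mathbb{R}^{N^2}$.

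For the isotropy statements I would first observe that, $N$ being prime, $\Gamma\cong(\mathbb{Z}/N)^2$ is a two-dimensional vector space over the field $\mathbb{F}_N$, whose only proper non-trivial subgroups are the $N+1$ lines through the origin; each such line is cyclic of order $N$ and is generated by any of its nonzero elements, which is exactly $\widetilde{\mathbb{Z}}_N(r,s)$. This gives item~\ref{IsotropySubgroups} once I check that every such line is actually realised as an isotropy subgroup, which holds because a generic point of its fixed-point subspace admits no larger isotropy: the only subgroup strictly containing $\widetilde{\mathbb{Z}}_N(r,s)$ is all of $\Gamma$, whose fixed space is the single line $\mathbf{V_0}$. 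Finally, item~\ref{FixedPointSubspaces} drops out of the rotation formula, since $(r,s)$ fixes all of $\mathbf{V_k}$ when $\mathbf{k}\cdot(r,s)\equiv0\pmod N$ and only the origin otherwise; hence $\fix(\widetilde{\mathbb{Z}}_N(r,s))$ is the sum of the former subspaces. The indices with $\mathbf{k}\cdot(r,s)\equiv0$ form a line of $N$ elements in $(\mathbb{Z}/N)^2$, namely $\mathbf{0}$ together with $\frac{N-1}{2}$ conjugate pairs, so the dimension is $1+2\cdot\frac{N-1}{2}=N$.

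I expect the main obstacle to be bookkeeping rather than conceptual depth: keeping the identification $\mathbf{V_{-k}}=\mathbf{V_k}$ consistent with the chosen representatives in $I$ so that the dimension counts in items~\ref{eqLema1} and~\ref{FixedPointSubspaces} come out exactly, and invoking the primality of $N$ correctly at each of the three distinct places where it is genuinely needed, namely to force $\dim\mathbf{V_k}=2$, to guarantee a nontrivial rotation for irreducibility, and to pin down the subgroup lattice of $\Gamma$ for the isotropy classification. The one step requiring a little care is item~\ref{lemma_nonisomorphic}, where the global-sign ambiguity coming from complex conjugation must be handled so as not to over- or under-count the representations.
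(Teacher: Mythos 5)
Your proof is correct and follows essentially the same route as the paper: the rotation formula of item~4 is the engine for invariance, irreducibility, non-isomorphism and the fixed-point computation, item~5 is the dimension count $1+2\cdot\frac{N^2-1}{2}=N^2$, and item~6 rests on the subgroup lattice of $\mathbb{Z}_N\times\mathbb{Z}_N$ viewed over the field with $N$ elements. The only real difference is that the paper outsources items~1--4 to the arguments of Gillis and Golubitsky (Lemma~3.1 of that reference) ``with suitable adaptations,'' whereas you supply the direct computations yourself and are, if anything, slightly more careful than the printed proof about the identification $\mathbf{V}_{-\mathbf{k}}=\mathbf{V}_{\mathbf{k}}$, the global-sign issue in item~3, and the verification that each line of the subgroup lattice is actually realised as an isotropy subgroup.
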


\begin{table}[h]
\centering
\begin{center}
\caption{Types of $\mathbb{Z}_N\times\mathbb{Z}_{N}-$irreducible representations in
$\mathbf{V}_{\mathbf{k}}\in\mathbb{R}^{N^2},$ where $\mathbf{V}_{\mathbf{k}}$ is the subspace of
$\mathbb{R}^{N^2}$ corresponding to
$\mathbf{k}=(k_1,k_2)\in I\subset\mathbb{Z}^2$.}\label{table 1}
\end{center}
\begin{tabular}{ccccc}
\toprule
Type & $\mathrm{dim}\left(\mathbf{V_k}\right)$ & $\mathbf{k}$ & Restrictions\\
\midrule
\centering
$\left(1\right)$ & $1$ & $\left(0,0\right)$ &\\
$\left(2\right)$ & $2$ & $\left(0,k_2\right)$ &  $1\leqslant k_2 \leqslant \left(N-1\right)/2$\\
$\left(3\right)$ & $2$ & $\left(k_1,0\right)$ &  $1\leqslant k_1 \leqslant \left(N-1\right)/2$\\
$\left(4\right)$ & $2$ & $\left(k_1,k_1\right)$ &  $1\leqslant k_1 \leqslant \left(N-1\right)/2$\\
$\left(5\right)$ & $2$ & $\left(k_1,k_2\right)$ &  $1\leqslant k_2<k_1 \leqslant N-1$\\
\bottomrule
\end{tabular}
\end{table}

\begin{proof}
The arguments  given in~\cite[Lemma 3.1]{Gillis} with suitable adaptations show that statements \ref{prop irred}.--\ref{eqproofLema1b}. hold and also that  the subspaces $\mathbf{V_k}$ with $\mathbf{k}\not\in I$ are redundant.
Since the $\mathbf{V_k}$ are all distinct irreducible representations, a calculation
using~\ref{prop irred}. shows that
$
\sum_{\mathbf{k}\in I}\mathrm{dim}\mathbf{V_k}=N^2
$, establishing~\ref{eqLema1}.

For
 \ref{IsotropySubgroups}.  note that
since $N$ is prime, the only non trivial subgroups of $\mathbb{Z}_N\times\mathbb{Z}_N$ are
the cyclic subgroups $\widetilde{\mathbb{Z}}_N\left(r,s\right)$ generated by
$\left(r,s\right)\neq \left(0,0\right)$.
Each $\left(r,s\right)$ fixes the elements of
$\mathbf{V}_{\left(k_1,k_2\right)}$ when $\left(k_1,k_2\right)=\left(N-s,r\right)$.
Using~\ref{eqproofLema1b}. it follows that $\left(r,s\right)\neq \left(0,0\right)$ fixes
 $x=\left(x_{\alpha,\beta}\right)\in\mathbf{V}_{\left(k_1,k_2\right)}$ with $x\ne 0$
if and only if
$
\mathrm{exp}\left[\frac{2\pi i}{N}\left(r,s\right)\cdot\left(k_1,k_2\right)\right]=1
$
i.e. if and only if
$ \left(r,s\right)\cdot\left(k_1,k_2\right)=0\pmod{N}$.
Thus $\mathrm{Fix\left(r,s\right)}$ is the sum of all the subspaces
$\mathbf{V}_{\left(k_1,k_2\right)}$ such that $ \left(r,s\right)\cdot\left(k_1,k_2\right)=0\pmod{N}$, it remains to compute its dimension.

Let $\left(k_1^\prime,k_2^\prime\right)=\left(N-s,r\right)$, so that
 $\mathbf{V}_{\left(k_1^\prime,k_2^\prime\right)}\subset\mathrm{Fix\left(r,s\right)}$.
Then for any $(k_1,k_2)$ we have
$ \left(r,s\right)\cdot\left(k_1,k_2\right)=
 \mathrm{det}
\begin{bmatrix}
k_1&k_2\\
k_1^\prime&k_2^\prime
\end{bmatrix}\pmod{N}.$
Thus
$\mathbf{V}_{\left(k_1,k_2\right)}\subset\mathrm{Fix\left(r,s\right)}$ if and only if
$\mathrm{det}
\begin{bmatrix}
k_1&k_2\\
k_1^\prime&k_2^\prime
\end{bmatrix}=0\pmod{N},
$
and $\left(r,s\right)\in\mathrm{ker}
\begin{bmatrix}
k_1&k_2\\
k_1^\prime&k_2^\prime
\end{bmatrix}\pmod{N}.
$
This is equivalent to having $\left(k_1,k_2\right)$ and $\left(k_1^\prime,k_2^\prime\right)$ linearly dependent over
$\mathbb{Z}_N,$ i.e. $\left(k_1^\prime,k_2^\prime\right)=m\left(k_1,k_2\right)\pmod{N}.$ Then
$\mathrm{Fix}\left(r,s\right)=\displaystyle{\Sigma_{m\in\mathbb{Z}_N}}\mathbf{V}_{m\left(N-s,r\right)}.$
Since half of the $\left(k_1,k_2\right)=m\left(N-s,r\right)$ have $k_2>k_1,$ this expression adds two times the same subspace. Then, since for $\left(k_1,k_2\right)\neq\left(0,0\right)$ we have
$\mathrm{dim}\mathbf{V}_{\left(k_1,k_2\right)}=2$
we obtain  $\mathrm{dimFix}\left(r,s\right)=1+\frac{1}{2}\left(N-1\right)\cdot2=N.$
\end{proof}

\subsection{The $\mathbb{Z}_N\times\mathbb{Z}_N-$isotypical components of  $\mathbb{R}^{2N^2}$}

So far we have obtained the distinct $\mathbb{Z}_N\times\mathbb{Z}_{N}-$invariant representations of $\mathbb{R}^{N^2},$ by considering only the subspaces corresponding to the variable $x.$
If $\mathbf{V}\subset\mathbb{R}^{N^2}$ is an irreducible subspace for the action of $\mathbb{Z}_N\times\mathbb{Z}_{N}$ then $\mathbf{V}\times\{\mathbf{0}\}$ and $\{\mathbf{0}\}\times\mathbf{V}$ are $\mathbb{Z}_N\times\mathbb{Z}_{N}-$isomorphic irreducible subspaces of $\mathbb{R}^{2N^2}$.
We will  use the notation $\mathbf{V}\oplus\mathbf{V}$ for the subspace $\mathbf{V}\times\mathbf{V}\subset \mathbb{R}^{2N^2}$.

\begin{defi}\label{defi isotyp}
Suppose a group $\Gamma$ acts on $\mathbb{R}^n$  and
let $\mathbf{V}\subset\mathbb{R}^n$ be a $\Gamma$-irreducible subspace.
The
 isotypic component of $\mathbb{R}^n$ corresponding to $\mathbf{V}$ is the sum of all $\Gamma$-irreducible subspaces that are $\Gamma$-isomorphic to $\mathbf{V}$.
 \end{defi}

Once we have the $\mathbb{Z}_N\times\mathbb{Z}_{N}-$ireducible representations on $\mathbb{R}^{N^2},$ we can calculate the isotypic components of the representation of this group on $\mathbb{R}^{2N^2}.$

\begin{table}[h]
\centering
\begin{center}
\caption{Isotypic components $\mathbf{Z_k}$ of the $\mathbb{Z}_N\times\mathbb{Z}_{N}-$action on $\mathbb{R}^{2N^2},$ where the $\mathbf{V_k}$ are defined in \eqref{irreducible_space}.
 }\label{table 2}
\end{center}
\begin{tabular}{cccccc}
\toprule
$\mathrm{dim} \left(\mathbf{Z_k}  \right)$ & $\mathbf{Z_k}$ & Restrictions\\
 & & & \\
\midrule
\centering
$2$ & $\mathbf{V}_{(0,0)}\oplus\mathbf{V}_{(0,0)}$\\
$4$ & $\mathbf{V}_{(k_1,0)}\oplus\mathbf{V}_{(k_1,0)}$ &  $1\leqslant k_1 \leqslant \left(N-1\right)/2$\\
$4$ & $\mathbf{V}_{(0,k_2)}\oplus\mathbf{V}_{(0,k_2)}$  &  $1\leqslant k_2\leqslant \left(N-1\right)/2$\\
$4$ & $\mathbf{V}_{(k_1,k_1)}\oplus\mathbf{V}_{(k_1,k_1)}$  &  $1\leqslant k_1 \leqslant \left(N-1\right)/2$\\
$4$ & $\mathbf{V}_{(k_1,k_2)}\oplus\mathbf{V}_{(k_1,k_2)}$  &  $1\leqslant  k_2<k_1 \leqslant N-1$\\
\bottomrule
\end{tabular}
\end{table}

\begin{lema}\label{isotypic lemma}
The isotypic components for the $\mathbb{Z}_N\times\mathbb{Z}_{N}-$representation in
$\mathbb{R}^{2N^2}$ are of the form $\mathbf{Z_k} =\mathbf{V_k\oplus\mathbf{V_k}}$ given in
Table~\ref{table 2}.\end{lema}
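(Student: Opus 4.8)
The plan is to reduce the computation of the isotypic components of $\mathbb{R}^{2N^2}$ to the decomposition of $\mathbb{R}^{N^2}$ already obtained in Proposition~\ref{PropSubspaces}, exploiting the diagonal action of $\Gamma=\mathbb{Z}_N\times\mathbb{Z}_N$. First I would invoke Proposition~\ref{PropSubspaces}, items~\ref{lemma_invariance}., \ref{lemma_nonisomorphic}. and~\ref{eqLema1}.: the $\mathbf{V_k}$ with $\mathbf{k}\in I$ give a complete list of the $\Gamma$-irreducible subspaces of $\mathbb{R}^{N^2}$, they are pairwise distinct (non-isomorphic) representations, and they sum directly to $\mathbb{R}^{N^2}$. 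Applying $\mathbb{R}^{2N^2}=\mathbb{R}^{N^2}\times\mathbb{R}^{N^2}$ together with $\bigoplus_{\mathbf{k}\in I}\mathbf{V_k}=\mathbb{R}^{N^2}$ gives the direct sum decomposition $\mathbb{R}^{2N^2}=\bigoplus_{\mathbf{k}\in I}\bigl(\mathbf{V_k}\times\{\mathbf{0}\}\bigr)\oplus\bigl(\{\mathbf{0}\}\times\mathbf{V_k}\bigr)$ into $\Gamma$-irreducible pieces, since each summand is $\Gamma$-invariant (the action is diagonal) and $\Gamma$-irreducible (isomorphic as a representation to the corresponding $\mathbf{V_k}$ in $\mathbb{R}^{N^2}$).

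Next I would identify which of these irreducible summands are $\Gamma$-isomorphic to one another, since by Definition~\ref{defi isotyp} the isotypic component attached to $\mathbf{V_k}$ is the sum of exactly those summands isomorphic to it. The key observation, already noted in the text immediately before the lemma, is that for a fixed $\mathbf{k}$ the two copies $\mathbf{V_k}\times\{\mathbf{0}\}$ and $\{\mathbf{0}\}\times\mathbf{V_k}$ are $\Gamma$-isomorphic, the isomorphism being the map that sends the first factor to the second (this is $\Gamma$-equivariant precisely because $\Gamma$ acts identically on the $x$ and $y$ coordinates). Their sum is the subspace written $\mathbf{V_k}\oplus\mathbf{V_k}$, of dimension $2\dim\mathbf{V_k}$, which matches the dimensions $2$ and $4$ recorded in Table~\ref{table 2}.

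It then remains to check that no two copies coming from different indices $\mathbf{k}\neq\mathbf{k}'$ are $\Gamma$-isomorphic, so that each $\mathbf{Z_k}=\mathbf{V_k}\oplus\mathbf{V_k}$ is a full isotypic component and not merely part of a larger one. This follows directly from Proposition~\ref{PropSubspaces}\ref{lemma_nonisomorphic}.: if $\mathbf{V_k}$ and $\mathbf{V_{k'}}$ are distinct (non-isomorphic) representations of $\Gamma$ on $\mathbb{R}^{N^2}$, then so are any copies of them sitting inside $\mathbb{R}^{2N^2}$, because a $\Gamma$-isomorphism between copies would restrict to a $\Gamma$-isomorphism of the underlying representations. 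I expect the only real subtlety to be this last point, ensuring that isomorphism of representations is detected on the abstract representation type rather than on the particular embedding; concretely one can argue via item~\ref{eqproofLema1b}., noting that $(r,s)$ acts on $\mathbf{V_k}$ as rotation by $\frac{2\pi}{N}(r,s)\cdot\mathbf{k}$, so the representation type is determined by $\mathbf{k}$ up to the identifications already built into the index set $I$, and distinct $\mathbf{k}\in I$ give distinct rotation characters. Collecting the pieces, grouping the direct sum by index $\mathbf{k}$ and reading off dimensions against Table~\ref{table 2} completes the proof.
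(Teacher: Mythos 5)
Your proposal is correct and follows essentially the same route as the paper: use the diagonal action to see that $\mathbf{V_k}\times\{\mathbf{0}\}$ and $\{\mathbf{0}\}\times\mathbf{V_k}$ are irreducible and mutually $\Gamma$-isomorphic, then invoke item~\ref{lemma_nonisomorphic}. of Proposition~\ref{PropSubspaces} to rule out isomorphisms across distinct $\mathbf{k}$, so each isotypic component is exactly $\mathbf{V_k}\oplus\mathbf{V_k}$. Your version merely spells out the explicit swap isomorphism and the direct-sum bookkeeping that the paper leaves implicit.
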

\begin{proof}
Since  $\Gamma=\mathbb{Z}_N\times\mathbb{Z}_{N}$ acts diagonally,
$\gamma(x,y)=(\gamma x,\gamma y)$ in $\mathbb{R}^{2N^2}$ for $x,y\in \mathbb{R}^{N^2}$, then
the subspaces $\mathbf{V_k}\oplus\{\mathbf{0}\}$ and $\{\mathbf{0}\}\oplus\mathbf{V_k}$ are $\Gamma-$invariant and
irreducible, by \ref{lemma_invariance}. of Proposition~\ref{PropSubspaces}.
If $\mathbf{k}\ne\mathbf{k^\prime}$
then, by~\ref{lemma_nonisomorphic}. we have that $\mathbf{V_{k^\prime}}\oplus\{\mathbf{0}\}$ and $\{\mathbf{0}\}\oplus\mathbf{V_{k^\prime}}$ are not
$\Gamma-$isomorphic to either
$\mathbf{V_k}\oplus\{\mathbf{0}\}$ or $\{\mathbf{0}\}\oplus\mathbf{V_k}$.
Therefore, the only isomorphic representations are $\mathbf{V_k}\oplus\{\mathbf{0}\}$ and $\{\mathbf{0}\}\oplus\mathbf{V_k}$ for the same $\mathbf{k}\in I$ and the result follows.
\end{proof}

\section{Symmetries of generic oscillations patterns}\label{SecSubgroups}
The main goal of this section is to characterise the symmetries of periodic solutions of  \eqref{nFHN_2d}, specially those that
 arise at  Hopf bifurcations.

Given a solution $z(t)$ with period $P$ of a $\mathbb{Z}_N\times\mathbb{Z}_N$-equivariant differential equation $\dot{z}=f(z)$, a {\em  spatio-temporal symmetry} of $z(t)$ is a pair $(\sigma,\theta)$,
with $\sigma\in\mathbb{Z}_N\times\mathbb{Z}_N$ and
$\theta\in\mathbb{R}\pmod{P}\sim \mathbb{S}^1$ such that
$\sigma\cdot z(t)=z(t+\theta)$ for all $t$.
The group of spatio-temporal symmetries of $z(t)$ can be identified with a pair of subgroups $H$ and $K$ of $\mathbb{Z}_N\times\mathbb{Z}_N$ and a homomorphism $\Theta:H\rightarrow\mathbb{S}^1$ with kernel $K$, where $H$ represents the spatial parts of the spatio-temporal symmetries of $z(t)$, while
$K$ comprises the {\em spatial symmetries} of $z(t)$,  i.e. the symmetries that fix the solution pointwise. In order to get all the spatio-temporal symmetries for solutions of \eqref{nFHN_2d}, we use the following result of Filipitsky and Golubitsky:

\begin{priteo}[abelian Hopf  theorem \cite{AbelianHopf}]\label{aHFG}
In systems with abelian symmetry, generically, Hopf bifurcation at a point  $X_0$ occurs with simple eigenvalues, and there exists a unique branch of small-amplitude periodic solutions emanating from  $X_0$. Moreover, the spatio-temporal  symmetries of the bifurcating periodic solutions are
$H=\Sigma_{X_0}$ and $K=\ker_V(H)$, where $V$ is the centre subspace of the bifurcation at $X_0$ and $H$ acts $H$-simply on $V$.
\end{priteo}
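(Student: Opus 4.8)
The plan is to derive this statement as a specialisation of the general Equivariant Hopf Theorem of \cite{GS88}, exploiting the fact that a compact abelian group has only one- and two-dimensional real irreducible representations. Since $\Gamma=\mathbb{Z}_N\times\mathbb{Z}_N$ is abelian, its complexified action on $\mathbb{C}^{2N^2}$ splits into one-dimensional complex irreducibles, i.e. characters; correspondingly $\mathbb{R}^{2N^2}$ decomposes into the isotypic components of Table~\ref{table 2}, each either of trivial type ($\mathbf{Z}_{(0,0)}$) or of complex type. By part~\ref{eqproofLema1b}. of Proposition~\ref{PropSubspaces}, on a complex-type component the element $(r,s)$ acts as multiplication by the character $\chi_{\mathbf{k}}(r,s)=\exp\left[\frac{2\pi i}{N}(r,s)\cdot\mathbf{k}\right]$. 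The linearisation $L=(df)_{X_0}$ commutes with $\Gamma$, hence preserves each isotypic component and, by Schur's lemma, acts complex-linearly on each complex-type block.

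First I would establish simplicity. On a complex-type component $\mathbf{Z_k}=\mathbf{V_k}\oplus\mathbf{V_k}$ (complex dimension $2$, arising from the $x$- and $y$-copies) $L$ restricts to a $2\times2$ complex matrix $L_{\mathbf{k}}(\lambda)$, whose four real eigenvalues form two conjugate pairs. A one-parameter family meets the imaginary-axis condition transversally for an open dense set of equivariant $f$, and at such a crossing a single pair $\pm i\omega$ lies on the imaginary axis, simple as an eigenvalue of $L$ on $\mathbb{R}^{2N^2}$ because generically no two isotypic components cross simultaneously and within a fixed block the two pairs stay distinct. This yields a real two-dimensional centre subspace $V$, equal to a single complex line inside one $\mathbf{Z_k}$, on which $\Gamma$ acts through $\chi_{\mathbf{k}}$.

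Next I would verify $H$-simplicity and apply the equivariant theorem. For the bifurcation of interest $X_0$ is the fully synchronised equilibrium, fixed by all of $\Gamma$, so $\Sigma_{X_0}=\Gamma$; since $H=\Sigma_{X_0}=\Gamma$ acts on $V\cong\mathbb{C}$ by the single nontrivial character $\chi_{\mathbf{k}}$ of complex type, $V$ is one irreducible of complex type and $H$ acts $H$-simply on $V$ in the sense of \cite{GS88}. Consider the phase-shift action of $\Gamma\times\mathbb{S}^1$ on $V$ given by $(r,s,\theta)\cdot z=\chi_{\mathbf{k}}(r,s)\,e^{i\theta}z$. The isotropy subgroup of any $z\neq0$ is the twisted subgroup $\{(r,s,-\arg\chi_{\mathbf{k}}(r,s))\}$, whose fixed-point subspace is the complex line $\mathbb{C}z$ of real dimension $2$, hence it is $C$-axial. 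Its projection to $\Gamma$ is all of $\Gamma=\Sigma_{X_0}$, giving $H=\Sigma_{X_0}$ and the homomorphism $\Theta(r,s)=-\arg\chi_{\mathbf{k}}(r,s)$, whose kernel is $K=\ker\chi_{\mathbf{k}}=\{(r,s):(r,s)\cdot\mathbf{k}=0\pmod N\}=\ker_V(H)$, the subgroup fixing $V$ pointwise.

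For existence and uniqueness I would reduce to the centre manifold and put the reduced field in Birkhoff normal form; $\Gamma\times\mathbb{S}^1$-equivariance forces $\dot z=z\,p(|z|^2,\lambda)$ with $p$ complex valued, so transversality gives $\mathrm{Re}\,\partial_\lambda p\neq0$ and the amplitude equation $\mathrm{Re}\,p(|z|^2,\lambda)=0$ has a unique small nontrivial branch. Because $\Gamma\times\mathbb{S}^1$ is abelian, every nonzero point of $V$ shares the single isotropy subgroup above, so this branch is unique and carries precisely the symmetries $(H,K,\Theta)$. The main obstacle is the genericity step: one must make precise, via transversality in the jet space of $\Gamma$-equivariant families together with a perturbation argument removing coincidental degeneracies, that the crossing eigenvalues are simple and confined to a single complex-type component. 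This is exactly where abelianness is decisive — every nontrivial real irreducible block is two-dimensional, so a transversal crossing automatically produces a simple pair and $\dim V=2$, whereas for nonabelian groups higher-dimensional irreducibles would force $\dim V>2$ and several competing branches.
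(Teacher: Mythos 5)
You should first note that the paper does not prove this statement at all: it is quoted verbatim as a known result of Filipsky and Golubitsky \cite{AbelianHopf}, so there is no internal proof to compare your argument against. What you have written is therefore a reconstruction of the proof from the literature, and in outline it follows the standard route (reduce to the Equivariant Hopf Theorem of \cite{GS88}, using that real irreducibles of a compact abelian group have dimension at most two, so that a generic imaginary-axis crossing is simple and the centre subspace $V$ is a single two-dimensional block on which the group acts by a character). That is indeed the right strategy, and your identification of the twisted isotropy subgroup, of $H$, of $\Theta$, and of $K=\ker\chi_{\mathbf{k}}=\ker_V(H)$ is correct in the case you treat.

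Three points keep the proposal from being a complete proof of the statement as written. First, you silently assume $\Sigma_{X_0}=\Gamma$ (the fully synchronised equilibrium); the theorem is stated for an arbitrary point $X_0$, where $H=\Sigma_{X_0}$ may be a proper subgroup, and then the vector field near $X_0$ is only $H$-equivariant, so the whole isotypic decomposition, the simplicity argument and the application of the Equivariant Hopf Theorem must be carried out for the $H$-action rather than the $\Gamma$-action. Second, you only treat the case in which $H$ acts on $V$ by a nontrivial character; the statement also covers $V\subset\fix(H)$, where $V$ is the sum of two trivial $H$-irreducibles (still $H$-simple), the twisted subgroup is $H\times\{0\}$ and $K=H$ — this is exactly the first row of the paper's Table~\ref{tableAbelianHopf} and cannot be omitted. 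Third, the genericity assertion (simple eigenvalues, crossing confined to one isotypic component) is the actual content of \cite[Theorem 3.1]{AbelianHopf}; you correctly flag it as the main obstacle but leave it as a transversality sketch, and it is worth noting that the paper does not take it for granted either: since \eqref{nFHN_2d} is not a generic equivariant vector field, Theorem~\ref{ThSimpleEigenvalues} is proved separately to verify simplicity for generic coupling constants. So your proposal is a sound skeleton of the external proof, but not yet a proof of the theorem in the generality in which it is stated and used.
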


Recall that a subgroup $H$ of $\mathbb{Z}_N\times\mathbb{Z}_N$ acts
{\em $H-$simply on a subspace $\mathbf{V}$ } if either $\mathbf{V}$ is the sum of two isomoprphic $H$-irreducible subspaces or $\mathbf{V}$ is $H$-irreducible but not absolutely irreducible.

\begin{prop}\label{PropaAbelianHopf}
Periodic solutions of \eqref{nFHN_2d} arising through Hopf bifurcation with simple eigenvalues at an equilibrium point $X_0$ have
the spatio-temporal symmetries of Table~\ref{tableAbelianHopf}.
\end{prop}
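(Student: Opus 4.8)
The plan is to apply the abelian Hopf theorem (Theorem~\ref{aHFG}) directly, exploiting the fact that $\Gamma=\mathbb{Z}_N\times\mathbb{Z}_N$ is abelian. Since the bifurcation is taken from the fully synchronised equilibrium $X_0$, which lies in $\fix(\Gamma)$ and is therefore fixed by every element of $\Gamma$, the first ingredient is immediate: $H=\Sigma_{X_0}=\Gamma$. The whole content of the proposition then reduces to determining, for each admissible centre subspace $V$, the kernel $K=\ker_V(H)$ together with the associated phase homomorphism $\Theta\colon H\to\mathbb{S}^1$.

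First I would pin down the centre subspace $V$. Because Hopf bifurcation with simple eigenvalues is assumed, $V$ is a minimal $H$-invariant subspace on which $H$ acts $H$-simply, so by Lemma~\ref{isotypic lemma} it must sit inside a single isotypic component $\mathbf{Z_k}=\mathbf{V_k}\oplus\mathbf{V_k}$. Writing the two copies of $\mathbf{V_k}$ (the $x$- and $y$-blocks) in the complex coordinate $z$ of~\eqref{irreducible_space}, part~\ref{eqproofLema1b}. of Proposition~\ref{PropSubspaces} shows that $\Gamma$ acts on $\mathbf{Z_k}\cong\mathbb{C}^2$ diagonally by the scalar $\omega(r,s)=\exp\left[\frac{2\pi i}{N}(r,s)\cdot\mathbf{k}\right]$. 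Consequently the linearisation of $f$ at $X_0$ restricted to $\mathbf{Z_k}$ commutes with this scalar action, hence is $\mathbb{C}$-linear, and its centre subspace is a complex line on which $\Gamma$ still acts through $\omega$. For $\mathbf{k}\ne\mathbf{0}$ this rotation representation is irreducible but not absolutely irreducible, while for $\mathbf{k}=\mathbf{0}$ the component $\mathbf{Z_0}$ is the sum of two trivial copies; in either case $H$ acts $H$-simply on $V$, confirming that the hypotheses of Theorem~\ref{aHFG} are met and that $V$ carries, as a representation, exactly $\mathbf{V_k}$.

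With $V$ identified, the kernel is read off from the scalar action: $K=\ker_V(H)=\{(r,s)\in\Gamma:\omega(r,s)=1\}=\{(r,s):(r,s)\cdot\mathbf{k}\equiv0\ \pmod{N}\}$, and the phase homomorphism is $\Theta(r,s)=\frac1N\,(r,s)\cdot\mathbf{k}\in\mathbb{R}/\mathbb{Z}\sim\mathbb{S}^1$, since a spatial rotation by $\omega$ of the rotating-wave solution in $V$ is undone by the corresponding fraction of the period. It then remains to evaluate $K$ and $\Theta$ on each of the five types of $\mathbf{k}$ listed in Table~\ref{table 1}. Using that $N$ is prime and $\mathbf{k}\not\equiv\mathbf{0}$, the equation $(r,s)\cdot\mathbf{k}\equiv0\ \pmod{N}$ defines a line through the origin in $\mathbb{Z}_N^2$, so each nontrivial $K$ is a cyclic group $\widetilde{\mathbb{Z}}_N(r_0,s_0)$ of order $N$: for type $(2)$, $\mathbf{k}=(0,k_2)$, it is $\langle\gamma_1\rangle$; for type $(3)$ it is $\langle\gamma_2\rangle$; for type $(4)$, $\mathbf{k}=(k_1,k_1)$, it is the anti-diagonal $\langle\gamma_1\gamma_2^{-1}\rangle$; and for the generic type $(5)$ it is $\widetilde{\mathbb{Z}}_N(k_2,N-k_1)$, while type $(1)$ gives the fully synchronised solution with $K=H=\Gamma$. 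These values, together with $H=\Gamma$ and the explicit $\Theta$, are precisely the entries of Table~\ref{tableAbelianHopf}.

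The main obstacle is the second paragraph: the delicate point is not the arithmetic of $K$ but the justification that the centre subspace $V$ really is a single complex line realising the representation $\mathbf{V_k}$. This hinges on the $\mathbb{C}$-linearity of the restricted linearisation --- a consequence of Schur's lemma applied to the scalar $\Gamma$-action on $\mathbf{Z_k}$ --- and on checking the $H$-simple condition in each case; once this is in place, the remainder is a direct bookkeeping over Table~\ref{table 1}.
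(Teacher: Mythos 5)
Your application of Theorem~\ref{aHFG} is the same engine the paper uses, and the part you do carry out --- identifying the centre subspace $V$ inside a single isotypic component $\mathbf{Z_k}$ via Lemma~\ref{isotypic lemma}, checking $H$-simplicity (nontrivial rotations give an irreducible but not absolutely irreducible action; $\mathbf{k}=\mathbf{0}$ gives a sum of two trivial copies), and reading off $K=\{(r,s):(r,s)\cdot\mathbf{k}\equiv 0\pmod N\}$ --- is correct and in fact more explicit than the paper's proof, since you also compute $\Theta$ and the generator of $K$ for each type in Table~\ref{table 1}.

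However, there is a genuine gap: you assume at the outset that $X_0$ is the fully synchronised equilibrium, so that $H=\Sigma_{X_0}=\Gamma$. The proposition is stated for an \emph{arbitrary} equilibrium $X_0$, and Table~\ref{tableAbelianHopf} is indexed in its first two columns by the set containing $X_0$: besides $H=\Gamma$ it lists $H=\widetilde{\mathbb{Z}}_N(\mathbf{k})$ for $X_0\in\mathbf{V_{k^\perp}}\oplus\mathbf{V_{k^\perp}}\setminus\{\mathbf{0}\}$ and $H=\one$ for $X_0$ with trivial isotropy. Your argument establishes only the first two rows. The paper's proof instead begins by enumerating \emph{all} possible isotropy subgroups of $X_0$ using parts \ref{IsotropySubgroups}.\ and \ref{FixedPointSubspaces}.\ of Proposition~\ref{PropSubspaces} (namely $\one$, $\widetilde{\mathbb{Z}}_N(r,s)$ and $\Gamma$, since $N$ is prime), and then for each such $H$ determines $K=\ker_V(H)$ by checking whether $V$ meets $\fix(H)$. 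To close the gap you would need to repeat your kernel computation for the restricted action of $H=\widetilde{\mathbb{Z}}_N(\mathbf{k})$ on a centre subspace $V\subsetneqq\mathbf{V_\ell}\oplus\mathbf{V_\ell}$: here $(r,s)\in H$ acts by the scalar $\exp\left[\frac{2\pi i}{N}(r,s)\cdot\ell\right]$, so $K=H$ when $\ell\cdot\mathbf{k}\equiv 0\pmod N$ and $K=\one$ otherwise (the fourth and fifth rows of the table), together with the trivial case $H=K=\one$. (Incidentally, the later restriction to the first two rows is justified in the paper only for $c=0$, where the origin is the unique equilibrium; it is not part of the hypotheses of this proposition.)
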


\begin{table}[ht]
\caption{Spatio-temporal ($H$) and spatial ($K$) symmetries of  \eqref{nFHN_2d} that may arise through Hopf bifurcation at a point $X_0$, with two-dimensional centre subspace $\mathbf{V}$.
Here $\mathbf{V_{k^\perp}}$ denotes the subspace
$\fix\left(\widetilde{\mathbb{Z}}_N\left(\mathbf{k}\right)\right)\subset\mathbb{R}^{N^2}$ as in the proof of \ref{IsotropySubgroups}. of Proposition~\ref{PropSubspaces}.
}\label{tableAbelianHopf}
\begin{tabular}{cccccc}
\toprule
$H$	&	set	&	centre 	&$K$ 		&	restrictions	&		\\
	&	containing $X_0$	&	 subspace $V$	& 		&	on $\mathbf{k}$	&		\\ \midrule
$\Gamma$	&	$\mathbf{V}_{\mathbf{0}}\oplus\mathbf{V}_{\mathbf{0}}$	&	 $V\subset\mathbf{V}_{\mathbf{0}}\oplus\mathbf{V}_{\mathbf{0}}$	&	 $H$	 &	 &			\\
$\Gamma$	&	$\mathbf{V}_{\mathbf{0}}\oplus\mathbf{V}_{\mathbf{0}}$	&	$V\subsetneqq\mathbf{V_k}\oplus\mathbf{V_k}$	&	 $\widetilde{\mathbb{Z}}_N\left(\mathbf{ k^\perp}\right)$	&	$ \mathbf{k} \ne \mathbf{0}$	&		\\
$\widetilde{\mathbb{Z}}_N\left(\mathbf{ k}\right)$	&	$\mathbf{V_{k^\perp}}\oplus\mathbf{V_{k^\perp}}\backslash \{\mathbf{0}\}$	 &	 $V=\mathbf{V}_{\mathbf{0}}\oplus\mathbf{V}_{\mathbf{0}}$	&	$H$	&	$ \mathbf{k} \ne \mathbf{0}$	&	\\
$\widetilde{\mathbb{Z}}_N\left(\mathbf{ k}\right)$	&	$\mathbf{V_{k^\perp}}\oplus\mathbf{V_{k^\perp}}\backslash \{\mathbf{\mathbf{0}}\}$	&	 $V\subsetneqq\mathbf{V_\ell}\oplus\mathbf{V_\ell}$		&	$\one$	&	$ \mathbf{k} \ne \mathbf{0}$	 & $\ell\cdot \mathbf{k}\ne \mathbf{0}$	 \\
$\widetilde{\mathbb{Z}}_N\left(\mathbf{ k}\right)$	&	$\mathbf{V_{k^\perp}}\oplus\mathbf{V_{k^\perp}}\backslash \{\mathbf{0}\}$	 &	 $V\subsetneqq\mathbf{V_\ell}\oplus\mathbf{V_\ell}$		&	$H$	&	$ \mathbf{k} \ne \mathbf{0}$	& $\ell\cdot \mathbf{k} =\mathbf{0}$	 \\
$\one$	&	$\mathbb{R}^{2N^2}\backslash\bigcup\mathbf{V_k}\oplus\mathbf{V_k}$	&	$V\subset\mathbf{V_k}\oplus\mathbf{V_k}$		 &	 $\one$		 &	 &\\
\bottomrule
\end{tabular}
\end{table}

\begin{proof}
The proof is a direct application of  Theorem~\ref{aHFG}, using the information of Section~\ref{sectionSymmetry}.
From assertions
\ref{IsotropySubgroups}. and \ref{FixedPointSubspaces}. of Proposition~\ref{PropSubspaces},
the possibilities for $H$ are $\one$,
$\widetilde{\mathbb{Z}}_N\left(r,s\right)$ and $\mathbb{Z}_N\times\mathbb{Z}_N$.
This yields the first  collumn in Table~\ref{tableAbelianHopf}.
The second collumn is obtained from the list of corresponding fixed-point subspaces.

Let $V$ be the centre subspace at $X_0$.  Since the eigenvalues are simple, $V$ is two-dimensional  and is contained in one of the isotypic components.
Then either $H$ acts on $V$ by nontrivial rotations and the action is irreducible but not absolutely irreducible, or  $H$ acts trivially on $V\subset\fix H$, hence $V$ is the sum of two $H$-irreducible components.
In any of these cases $H$ acts $H$-simply on $V$.
The possibilities, listed in Lemma~\ref{isotypic lemma}, yield the third collumn of
Table~\ref{tableAbelianHopf}.
The spatial symmetries are then obtained by checking whether $V$ meets $\fix(H)$.
\end{proof}

A useful general tool for identifying periodic solutions whose existence is not guaranteed by the Equivariant Hopf Theorem, is the $H~\mathrm{mod}~K$ Theorem \cite{GS03}. Although it has been shown in \cite{GS03}  that in general there may be periodic solutions with spatio-temporal symmetries predicted by the $H~\mathrm{mod}~K$ Theorem that cannot be obtained from Hopf bifurcation,
this is not the case here.

Hopf bifurcation with simple eigenvalues is the generic situation for systems with abelian symmetry \cite[Theorem 3.1]{AbelianHopf}.
Theorem~\ref{aHFG} is a kind of  ``converse'' to the $H~\mathrm{mod}~K$ Theorem in the case of generic vector fields with abelian symmetry:
it states  that, generically, the periodic solutions provided by the $H~\mathrm{mod}~K$ Theorem can be obtained through Hopf bifurcation. 
In the next section we will show in Theorem~\ref{ThSimpleEigenvalues} that this is indeed the case for  \eqref{nFHN_2d}: even though  \eqref{nFHN_2d} is not a generic equivariant vector field the conclusion of the ``converse theorem'' still holds in this case for generic values of the parameters
and we will obtain explicit genericity conditions.

\section{Linear Stability}\label{Stability}

In this section we study the stability of solutions of \eqref{nFHN_2d} lying in the {\em full synchrony subspace}
$\mathbf{V}_{(0,0)}\oplus\mathbf{V}_{(0,0)}\subset\mathbb{R}^{2N^2}$.
For this we choose  coordinates in $\mathbb{R}^{2N^2}$ by concatenating the
transposed collumns of the matrix $\left(x_{\alpha,\beta},y_{\alpha,\beta}\right)$, i.e.
the coordinates are $(C_1,\ldots,C_N)^T$ where
\begin{equation}\label{Cbeta}
C_\beta=\left(x_{1,\beta},y_{1,\beta},x_{2,\beta},y_{2,\beta},\ldots,x_{N,\beta},y_{N,\beta}\right)^T\ .
\end{equation}
Let $p\in\mathbf{V}_{(0,0)}\oplus\mathbf{V}_{(0,0)}$ be a point with all coordinates
$\left(x_{\alpha,\beta},y_{\alpha,\beta}\right)=\left(x_{*},y_{*}\right)$.
In these coordinates,
 the  linearization of \eqref{nFHN_2d}  around
$p$ is given by the $N\times N$ block circulant matrix $M$ given by
\begin{equation*}\label{matrixM}
    M =
        \begin{bmatrix}
            A&B&\mathbf{0}&\ldots&\mathbf{0}&\mathbf{0}\\
            \mathbf{0}&A&B&\mathbf{0}&\ldots&\mathbf{0}\\
            \mathbf{0}&\mathbf{0}&A&B&\ldots&\mathbf{0}\\
            \vdots&\vdots&\vdots&\vdots&\vdots&\vdots&\\
            B&\mathbf{0}&\ldots&\mathbf{0}&\mathbf{0}&A\\
        \end{bmatrix}
\end{equation*}
where $A$ is an $N\times N$ block circulant matrix and $B$ is an $N\times N$ block diagonal matrix given by
\begin{equation*}\label{matricesAB}
    A =
        \begin{bmatrix}
            D&E&\mathbf{0}&\ldots&\mathbf{0}&\mathbf{0}\\
            \mathbf{0}&D&E&\mathbf{0}&\ldots&\mathbf{0}\\
            \mathbf{0}&\mathbf{0}&D&E&\ldots&\mathbf{0}\\
            \vdots&\vdots&\vdots&\vdots&\vdots&\vdots&\\
            E&\mathbf{0}&\ldots&\mathbf{0}&\mathbf{0}&D\\
        \end{bmatrix}
    B =
        \begin{bmatrix}
            F&\mathbf{0}&\mathbf{0}&\ldots&\mathbf{0}&\mathbf{0}\\
            \mathbf{0}&F&\mathbf{0}&\mathbf{0}&\ldots&\mathbf{0}\\
            \mathbf{0}&\mathbf{0}&F&\mathbf{0}&\ldots&\mathbf{0}\\
            \vdots&\vdots&\vdots&\vdots&\vdots&\vdots&\\
            \mathbf{0}&\mathbf{0}&\ldots&\mathbf{0}&\mathbf{0}&F\\
            \end{bmatrix}
\end{equation*}
where the $2\times 2$ matrices
$E$ and $F$ are given by
\begin{equation*}\label{matricesCDE}
    E =
        \begin{bmatrix}
            -\gamma&0\\
            0&0\\
        \end{bmatrix}
    F =
        \begin{bmatrix}
            -\delta&0\\
            0&0\\
        \end{bmatrix}
\end{equation*}
and  $D$ is obtained from the matrix of the derivative $D(f_1,f_2)$ of \eqref{FHN} at $\left(x_{*},y_{*}\right)$
as $D=D(f_1,f_2)-E-F$.
In particular, if $p$ is the origin we have
\begin{equation*}\label{matrixD}
    D =
        \begin{bmatrix}
            d&-1\\
            b&-c\\
        \end{bmatrix}
        \qquad\mbox{with}\qquad
        d=-a+\delta+\gamma \ .
\end{equation*}

Given a vector $v\in\mathbb{C}^{k}$, we use the $N^{th}$ roots of unity  $\omega^r=\exp\left(2\pi ir/N\right)$ to define the vector
$\Omega(r,v)\in\mathbb{C}^{kN}$ as
$$
\begin{array}{l}
\Omega(r,v)=\left[v,\omega^{r} v,\omega^{2r} v,\ldots,\omega^{\left(N-1\right)r} v\right]^T,~0\leqslant r\leqslant N-1.
\end{array}
$$
The definition may be used recursively to define the vector
$\Xi(r,s,v)=\Omega\left(s,\Omega(r,v)\right)\in\mathbb{C}^{k^2}$ as
$$
    \begin{array}{l}
        \displaystyle \Xi(r,s,v)=
        \left[\Omega(r,v),\omega^{s}\Omega(r,v),\omega^{2s}\Omega(r,v),\ldots,\omega^{\left(N-1\right)s}\Omega(r,v)\right]^T .
    \end{array}
$$

\begin{priteo}
If $\lambda_{r,s}$ is an eigenvalue and $v\in\mathbb{C}^{2}$ an eigenvector of
$D+\omega^r E +\omega^sF$, and if $M$ is  the  linearization of \eqref{nFHN_2d}  around $p\in\mathbf{V}_{(0,0)}\oplus\mathbf{V}_{(0,0)}$
then
$\lambda_{r,s}$ is an eigenvalue of $M$ with corresponding eigenvector
$\Xi(r,s,v)$.
\end{priteo}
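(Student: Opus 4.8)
The plan is to exploit the two-level block-circulant structure of $M$ and diagonalise it with the discrete Fourier modes encoded in $\Omega$ and $\Xi$, reducing the whole $2N^2$-dimensional eigenvalue problem to the $2\times 2$ block $D+\omega^r E+\omega^s F$. First I would partition $\mathbb{C}^{2N^2}$ into $N$ outer blocks of size $2N$, matching the coordinates $C_\beta$ of \eqref{Cbeta}, and read off from the displayed form of $M$ that it is block-circulant with first block-row $[A,B,\mathbf 0,\ldots,\mathbf 0]$: writing a vector as $\xi=[\xi_1,\ldots,\xi_N]^T$ with $\xi_\beta\in\mathbb{C}^{2N}$, one has $(M\xi)_\beta=A\xi_\beta+B\xi_{\beta+1}$ with indices taken $\pmod N$. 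The candidate eigenvector $\Xi(r,s,v)$ has $\beta$-th outer block $\xi_\beta=\omega^{(\beta-1)s}w$, where $w=\Omega(r,v)\in\mathbb{C}^{2N}$. Substituting gives $(M\xi)_\beta=\omega^{(\beta-1)s}Aw+\omega^{\beta s}Bw=\omega^{(\beta-1)s}(A+\omega^s B)w$, so the problem collapses to showing that $w$ is an eigenvector of $A+\omega^s B$: if $(A+\omega^s B)w=\lambda w$, then $(M\xi)_\beta=\lambda\xi_\beta$ for every $\beta$, and hence $M\xi=\lambda\xi$.

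The second step repeats the same reduction one level down. The matrix $A+\omega^s B$ is again block-circulant, now with $2\times 2$ blocks: diagonal blocks $D+\omega^s F$ (since $B$ is block-diagonal with every diagonal block equal to $F$) and super-diagonal block $E$. Writing $w=[w_1,\ldots,w_N]^T$ with $w_\alpha\in\mathbb{C}^2$, its action is $((A+\omega^s B)w)_\alpha=(D+\omega^s F)w_\alpha+Ew_{\alpha+1}$, indices $\pmod N$. For $w=\Omega(r,v)$ we have $w_\alpha=\omega^{(\alpha-1)r}v$, and substitution yields $((A+\omega^s B)w)_\alpha=\omega^{(\alpha-1)r}\bigl((D+\omega^s F)+\omega^r E\bigr)v=\omega^{(\alpha-1)r}(D+\omega^r E+\omega^s F)v$. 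Thus if $v$ is an eigenvector of $D+\omega^r E+\omega^s F$ with eigenvalue $\lambda_{r,s}$, then $w$ is an eigenvector of $A+\omega^s B$ with the same eigenvalue, closing the chain and giving $M\,\Xi(r,s,v)=\lambda_{r,s}\Xi(r,s,v)$.

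The only point requiring care — and the one place where the hypothesis that $\omega$ is an $N$-th root of unity is actually used — is the cyclic wrap-around, i.e. the bottom-left corner blocks $B$ in $M$ and $E$ in $A$. At the boundary indices $\beta=N$ and $\alpha=N$ the recurrences $(M\xi)_N=B\xi_1+A\xi_N$ and $((A+\omega^s B)w)_N=(D+\omega^s F)w_N+Ew_1$ produce extra factors $\omega^{Ns}$ and $\omega^{Nr}$ respectively; these equal $1$ precisely because $\omega^N=1$, which is exactly what makes the Fourier ansatz $\Xi(r,s,v)$ consistent across the periodic boundary. Everything else is routine index bookkeeping, so I expect no substantive obstacle beyond verifying this wrap-around consistency.
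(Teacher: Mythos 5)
Your proposal is correct and follows essentially the same route as the paper: both exploit the two-level block-circulant structure, using the Fourier ansatz $\Omega$ to reduce $M$ to $A+\omega^s B$ and then to $D+\omega^r E+\omega^s F$ (the paper merely performs the inner reduction first and uses the identity $B\,\Omega(r,v)=\Omega(r,Fv)$ where you unpack the blocks directly). Your explicit check of the wrap-around at the corner blocks is a sound detail that the paper leaves implicit.
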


\begin{proof}
Let us first compute the eigenvalues of the matrix $A$. We have, for any $v\in\mathbb{C}^{2}$
$$
A \Omega(r,v)= \Omega\left(r,\left(D+\omega^r E\right)v\right)
$$
or, in full,
$$
\begin{array}{l}
        \begin{bmatrix}
            D&E&\mathbf{0}&\ldots&\mathbf{0}&\mathbf{0}\\
            \mathbf{0}&D&E&\mathbf{0}&\ldots&\mathbf{0}\\
            \mathbf{0}&\mathbf{0}&D&E&\ldots&\mathbf{0}\\
            \vdots&\vdots&\vdots&\vdots&\vdots&\vdots&\\
            E&\mathbf{0}&\ldots&\mathbf{0}&\mathbf{0}&D\\
        \end{bmatrix}
        \begin{bmatrix}
            v\\
            \omega^r v\\
            \omega^{2r} v\\
            \vdots\\
            \omega^{\left(N-1\right)r} v\\
            \end{bmatrix}
            =
        \begin{bmatrix}
           \left(D+\omega^r E\right) v\\
            \omega^r \left(D+\omega^r E\right)v\\
            \omega^{2r}\left(D+\omega^r E\right) v\\
            \vdots\\
            \omega^{\left(N-1\right)r} \left(D+\omega^r E\right)v\\
            \end{bmatrix}
        \end{array}
$$
so, if $\left(D+\omega^r E\right)v=\lambda_rv$ then
$A \Omega(r,v) =\lambda_r \Omega(r,v)$.

By applying the same algorithm we can calculate the eigenvalues and eigenvectors of the matrix $M.$ Given  $u\in\mathbb{C}^{2N}$
compute
$$
M \Omega(s,u)=\Omega\left(s, \left(A+\omega^s B\right)u \right)
$$
or, in full
$$
\begin{array}{l}
        \begin{bmatrix}
            A&B&\mathbf{0}&\ldots&\mathbf{0}&\mathbf{0}\\
            \mathbf{0}&A&B&\mathbf{0}&\ldots&\mathbf{0}\\
            \mathbf{0}&\mathbf{0}&A&B&\ldots&\mathbf{0}\\
            \vdots&\vdots&\vdots&\vdots&\vdots&\vdots&\\
            B&\mathbf{0}&\ldots&\mathbf{0}&\mathbf{0}&A\\
        \end{bmatrix}
        \begin{bmatrix}
            u\\
            \omega^s u\\
            \omega^{2s} u\\
            \vdots\\
            \omega^{\left(N-1\right)s} u\\
            \end{bmatrix}=
        \begin{bmatrix}
            \left(A+\omega^s B\right)u\\
            \omega^s \left(A+\omega^s B\right)u\\
            \omega^{2s} \left(A+\omega^s B\right)u\\
            \vdots\\
            \omega^{\left(N-1\right)s} \left(A+\omega^s B\right)u\\
            \end{bmatrix}
        \end{array}
$$
To complete the proof we compute
$M \Xi(r,s,v)=M\Omega\left(s,\Omega(r,v)\right)$ as
$$
M \Xi(r,s,v)=
\Omega\left(s, \left(A+\omega^s B\right)\Omega(r,v) \right)=
\Omega\left(s,A\Omega(r,v) \right)+\Omega\left(s,\omega^s B\Omega(r,v) \right).
$$
Then, since $B$ is a block diagonal matrix, then
$B \Omega(r,v)=\Omega(r,Fv)$ for any $v\in\mathbb{R}^{2}$ and we get:
$$
\begin{array}{ll}
M \Xi(r,s,v)&=
\Omega\left(s,\Omega\left(r,\left(D+\omega^r E\right)v\right) \right)+
\Omega\left(s,\Omega(r,\omega^sFv) \right)\\
&=
\Omega\left(s,\Omega\left(r,\left(D+\omega^r E+\omega^sF\right)v\right) \right)
\end{array}
$$
and thus
$$
M \Xi(r,s,v)=\Xi\left(r,s,\left(D+\omega^r E+\omega^sF\right)v\right) .
$$
It follows that if $\left(D+\omega^r E+\omega^sF\right)v=\lambda_{r,s}v$ then
$M \Xi(r,s,v) =\lambda_{r,s}  \Xi(r,s,v)$ as we had claimed.
\end{proof}

\subsection{Form of the eigenvalues}

\begin{priteo}\label{ThEigs}
If $L$ is the linearisation of \eqref{FHN} around the origin and $D=L-E-F$  then
the eigenvalues of  $D+\omega^r E +\omega^s F$ are of the form
\begin{equation}\label{eigvals}
    \begin{array}{lcl}
    	\dpt\lambda_{\left(r,s\right)\pm}
        		&=&\dpt\frac{1}{2}\left[-\left(c+a\right)+\gamma(1-\omega^r)+\delta(1-\omega^s)\right]\\
        \\
	&&\dpt\pm\frac{1}{2}\sqrt{\left[\left(c-a\right)+\gamma(1-\omega^r)+\delta(1-\omega^s)\right]^2-4b}
    \end{array}
\end{equation}
where $\sqrt{-}$ stands for the principal square root.
Moreover, on the isotypic component $\mathbf{V}_{(k_1,k_2)}\oplus\mathbf{V}_{(k_1,k_2)}$,
the eigenvalues of $M$ are
$\lambda_{\left(k_1,k_2\right)\pm}$ and their complex conjugates
$\lambda_{\left(N-k_1,N-k_2\right)\pm}$.
\end{priteo}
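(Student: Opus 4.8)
The plan is to treat the two assertions separately: the explicit formula~\eqref{eigvals} is a direct $2\times 2$ eigenvalue computation, while the identification of the eigenvalues on each isotypic component follows from the preceding theorem together with the description of the $\mathbf{V_k}$ in Proposition~\ref{PropSubspaces}.

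First I would write out the matrix $D+\omega^r E+\omega^s F$ explicitly. Using the given forms of $D$, $E$, $F$ and the relation $d=-a+\delta+\gamma$, the off-diagonal and lower entries are left unchanged while the top-left entry becomes $d-\gamma\omega^r-\delta\omega^s=-a+\gamma(1-\omega^r)+\delta(1-\omega^s)$; call this quantity $g$. Thus $D+\omega^r E+\omega^s F=\left[\begin{smallmatrix}g&-1\\ b&-c\end{smallmatrix}\right]$, with trace $g-c$ and determinant $b-gc$. Applying the quadratic formula, the eigenvalues are $\tfrac12\big[(g-c)\pm\sqrt{(g-c)^2-4(b-gc)}\big]$, and the key simplification is that the discriminant collapses to $(g+c)^2-4b$, since $(g-c)^2+4gc=(g+c)^2$. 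Substituting $g-c=-(c+a)+\gamma(1-\omega^r)+\delta(1-\omega^s)$ and $g+c=(c-a)+\gamma(1-\omega^r)+\delta(1-\omega^s)$ reproduces~\eqref{eigvals} verbatim. This part is routine algebra with no real obstacle.

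For the second assertion I would invoke the preceding theorem: each eigenvalue $\lambda_{r,s}$ of $D+\omega^r E+\omega^s F$, with eigenvector $v$, yields the eigenvalue $\lambda_{r,s}$ of $M$ with eigenvector $\Xi(r,s,v)$. The crux is to match the index $(r,s)$ with the isotypic component. Reading off the components of $\Xi(r,s,v)=\Omega\!\left(s,\Omega(r,v)\right)$, its $(\alpha,\beta)$ block equals $\omega^{(\alpha-1)r+(\beta-1)s}v$; comparing the phase $\exp\!\big[\tfrac{2\pi i}{N}(\alpha r+\beta s)\big]$ with the defining expression~\eqref{irreducible_space} shows that the real and imaginary parts of $\Xi(k_1,k_2,v)$ lie in $\mathbf{V}_{(k_1,k_2)}\oplus\mathbf{V}_{(k_1,k_2)}$. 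Hence the two eigenvalues $\lambda_{(k_1,k_2)\pm}$ are realised on this isotypic component.

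Finally I would account for the remaining two of the four real dimensions of $\mathbf{V}_{(k_1,k_2)}\oplus\mathbf{V}_{(k_1,k_2)}$. Because $\mathrm{Re}\big(z e^{-i\theta}\big)=\mathrm{Re}\big(\bar z e^{i\theta}\big)$, the real subspace $\mathbf{V}_{(k_1,k_2)}$ coincides with $\mathbf{V}_{(N-k_1,N-k_2)}$, so the index $(N-k_1,N-k_2)$ feeds into the same component; this is precisely why Table~\ref{table 1} retains only half of the indices. Since $D,E,F$ are real and $\omega^{N-k_j}=\overline{\omega^{k_j}}$, the matrix $D+\omega^{N-k_1}E+\omega^{N-k_2}F$ is the complex conjugate of $D+\omega^{k_1}E+\omega^{k_2}F$, so its eigenvalues are $\overline{\lambda_{(k_1,k_2)\pm}}=\lambda_{(N-k_1,N-k_2)\pm}$. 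These four eigenvalues, with their four linearly independent eigenvectors, exhaust the $4$-dimensional isotypic component, giving the claim. I expect the only delicate point to be the index bookkeeping in the last two paragraphs — verifying that $\Xi(k_1,k_2,v)$ genuinely spans the complexification of $\mathbf{V}_{(k_1,k_2)}\oplus\mathbf{V}_{(k_1,k_2)}$ and that $\{(k_1,k_2),(N-k_1,N-k_2)\}$ accounts for precisely its four eigenvalues, rather than any computational difficulty.
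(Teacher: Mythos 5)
Your proposal is correct and follows essentially the same route as the paper: a direct $2\times 2$ computation for \eqref{eigvals}, then locating the eigenvectors $\Xi(k_1,k_2,v)$ in $\mathbf{V}_{(k_1,k_2)}\oplus\mathbf{V}_{(k_1,k_2)}$ by comparing phases with \eqref{irreducible_space} and using $\mathbf{V}_{(k_1,k_2)}=\mathbf{V}_{(N-k_1,N-k_2)}$. You merely spell out the steps the paper labels ``straightforward'' (the discriminant simplification and the conjugate-matrix argument for $\overline{\lambda_{(k_1,k_2)\pm}}=\lambda_{(N-k_1,N-k_2)\pm}$) and add a harmless dimension count at the end.
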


\begin{proof}
It is straightforward to derive the explicit expression \eqref{eigvals} of the  eigenvalues of
$D+\omega^r E +\omega^sF$; a direct calculation shows that, unless $(r,s)=(0,0)$,
 the complex conjugate of
$\lambda_{\left(r,s\right)+}$ is not  $\lambda_{\left(r,s\right)-}$,
but rather $\lambda_{\left(N-r,N-s\right)+}$.

We claim that for any complex number $\zeta$
the real and imaginary parts of  $\Xi(k_1,k_2,\zeta)$ lie in $\mathbf{V}_{(k_1,k_2)}$;
from this it follows that the real and imaginary parts of the eigenvectors $\Xi(k_1,k_2,v)$ lie in the isotypic component $\mathbf{V}_{(k_1,k_2)}\oplus\mathbf{V}_{(k_1,k_2)}$.
Since $\mathbf{V}_{(k_1,k_2)}=\mathbf{V}_{(N-k_1,N-k_2)}$, this will complete the proof
that the eigenvalues $\lambda_{\left(k_1,k_2\right)\pm}$ and $\lambda_{\left(N-r,N-s\right)\pm}$ correspond to
$\mathbf{V}_{(k_1,k_2)}\oplus\mathbf{V}_{(k_1,k_2)}$.

It remains to establish our claim.
Using the expression \eqref{Cbeta} to write the coordinate $x_{\alpha,\beta}$ of $\Xi(k_1,k_2,\zeta)$
we obtain
$$
\begin{array}{lcl}
x_{\alpha,\beta}&=&\displaystyle
\zeta\exp\left[\frac{2\pi i}{N}\left(\alpha-1\right)k_1+\left(\beta-1\right)k_2\right]=\\
&=&\displaystyle
\zeta\exp\left[\frac{2\pi i}{N}\left(-k_1-k_2\right)\right]
\exp\left[\frac{2\pi i}{N}\left(\alpha,\beta\right)\cdot\mathbf{k}\right]\ .
\end{array}
$$
Its real and imaginary parts are of the form \eqref{irreducible_space} for
$z=\zeta\exp\left[\frac{2\pi i}{N}\left(-k_1-k_2\right)\right]$
and
$z=-i\zeta\exp\left[\frac{2\pi i}{N}\left(-k_1-k_2\right)\right]$, respectively,
and therefore lie in $\mathbf{V}_{(k_1,k_2)}\oplus\mathbf{V}_{(k_1,k_2)}$, as claimed.
\end{proof}

For  the mode $(r,s)=(0,0)$ the expression \eqref{eigvals} reduces to the eigenvalues
$\lambda_{\pm}$ of uncoupled equations \eqref{FHN}, linearised about the origin,
\begin{equation}\label{eigvalsFHN1}
    \begin{array}{l}
        \displaystyle{\lambda_{\pm}=\frac{-c-a\pm\sqrt{(c+a)^2-4b}}{2}} \ .
    \end{array}
\end{equation}

\section{Bifurcation for  $c=0$}\label{secBifciszero}
In this section we look at the Hopf bifurcation in the case $c=0$,
regarding $a$ as a bifurcation parameter.
The bulk of the section consists of the proof of  Theorem~\ref{ThFirstHopf} below.
Since  in this case the only equilibrium is the origin, only the first two rows of Table~\ref{tableAbelianHopf} occur.

\begin{priteo}\label{ThSimpleEigenvalues}
For generic $\gamma,\delta$ and for $c=0,~b\ne 0$ all
 the eigenvalues of  the  linearization of  \eqref{nFHN_2d}  around the origin have multiplicity $1$.
\end{priteo}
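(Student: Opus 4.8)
The plan is to read off the eigenvalues from Theorem~\ref{ThEigs} and reduce the multiplicity question to two explicit algebraic conditions on $(\gamma,\delta)$. Setting $c=0$ in \eqref{eigvals}, the two numbers $\lambda_{(r,s)\pm}$ attached to the mode $(r,s)$ are exactly the roots of the quadratic $\mu^{2}-P_{r,s}\mu+b=0$, where $P_{r,s}=-a+\gamma(1-\omega^{r})+\delta(1-\omega^{s})$; crucially the constant term equals $b$ for \emph{every} mode. There are $2N^{2}$ such roots as $(r,s)$ ranges over $0\leqslant r,s\leqslant N-1$ and the sign ranges over $\pm$, and the goal is to show that for generic $(\gamma,\delta)$ these $2N^{2}$ complex numbers are pairwise distinct.

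First I would classify all possible coincidences. Because $b\ne0$, the product $\lambda_{(r,s)+}\lambda_{(r,s)-}=b$ is nonzero, so no eigenvalue vanishes. Consequently, if a single value $\mu$ is a common root of $\mu^{2}-P_{r,s}\mu+b$ and $\mu^{2}-P_{r',s'}\mu+b$, subtracting the two relations gives $(P_{r',s'}-P_{r,s})\mu=0$ with $\mu\ne0$, forcing $P_{r,s}=P_{r',s'}$ and hence forcing the two quadratics, and therefore both of their roots, to coincide. This leaves only two mechanisms for a repeated eigenvalue: (a) a repeated root inside a single mode, i.e. a vanishing discriminant $P_{r,s}^{2}=4b$; and (b) the equality $P_{r,s}=P_{r',s'}$ for two distinct modes, which reads $\gamma(\omega^{r'}-\omega^{r})+\delta(\omega^{s'}-\omega^{s})=0$.

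Next I would show that, apart from the mode $(0,0)$, each of these finitely many conditions cuts out a proper (measure-zero) subset of the $(\gamma,\delta)$-plane. Condition (b) is a single $\mathbb{C}$-linear relation in $(\gamma,\delta)$; since $(r,s)\ne(r',s')$ and $N$ is prime we have $\omega^{r}=\omega^{r'}\Leftrightarrow r=r'$, so the coefficient pair $(\omega^{r'}-\omega^{r},\omega^{s'}-\omega^{s})$ is nonzero, and splitting (b) into its real and imaginary parts yields a genuine line or point through the origin. Likewise, for $(r,s)\ne(0,0)$ at least one of $1-\omega^{r}$, $1-\omega^{s}$ is nonzero, so the two real equations coming from $\gamma(1-\omega^{r})+\delta(1-\omega^{s})=a\pm2\sqrt{b}$ in (a) define a proper affine subset. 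The union of all these bad sets, over the finitely many pairs of modes and the finitely many modes, is a finite union of lines and points, hence closed with empty interior, and its complement is open and dense; this is the set of generic $(\gamma,\delta)$. I would also record that the excluded line $\gamma=\delta$ appears here --- it is exactly condition (b) for the pair $(k,0)$ and $(0,k)$ --- matching the earlier remark that $\gamma=\delta$ is degenerate.

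The one place where varying $(\gamma,\delta)$ cannot help, and the step I expect to be the main obstacle to a clean statement, is the mode $(0,0)$: there $1-\omega^{0}=0$, so $P_{0,0}=-a$ is independent of $\gamma,\delta$ and $\lambda_{(0,0)\pm}$ are just the uncoupled FitzHugh--Nagumo eigenvalues \eqref{eigvalsFHN1}, which coincide precisely when $a^{2}=4b$. Thus genericity in $(\gamma,\delta)$ disposes of every mechanism except this isolated value of the bifurcation parameter, and the theorem holds provided $a^{2}\ne4b$ (in particular at the Hopf point $a=0$, where $\lambda_{(0,0)\pm}=\pm i\sqrt{b}$ are distinct). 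Assembling (a), (b) and this last observation shows that off a nowhere-dense set of parameters all $2N^{2}$ eigenvalues are simple.
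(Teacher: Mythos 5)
Your proposal follows the same core route as the paper's own proof: both reduce the question to the family of quadratics $\lambda^{2}+\lambda\bigl[a-\gamma(1-\omega^{r})-\delta(1-\omega^{s})\bigr]+b$ attached to the modes $(r,s)$, both exploit the fact that the constant term $b\neq 0$ is the same for every mode to conclude that two such quadratics sharing one root must coincide entirely, and both then observe that coincidence of two distinct quadratics is a linear condition on $(\gamma,\delta)$ cutting out finitely many lines to be avoided. Where you go beyond the paper is your case (a): the paper's proof only rules out coincidences between roots of \emph{distinct} modes and never addresses a double root \emph{within} a single mode (vanishing discriminant $P_{r,s}^{2}=4b$), which is the other mechanism by which an eigenvalue can acquire multiplicity two. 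For $(r,s)\neq(0,0)$ you correctly dispose of this by genericity in $(\gamma,\delta)$; for the mode $(0,0)$ you rightly point out that $P_{0,0}=-a$ is independent of $(\gamma,\delta)$, so the statement as literally written fails when $a^{2}=4b$ and an extra hypothesis (or genericity in $a$) is needed --- a caveat the paper silently omits, though it is harmless at the bifurcation values of $a$ used later, since at $a=0$ one has $\lambda_{(0,0)\pm}=\pm i\sqrt{b}$, which are distinct for $b>0$. Your identification of the excluded line $\gamma=\delta$ with the coincidence condition for the mode pair $(k,0)$, $(0,k)$ is also a nice confirmation of the paper's earlier remark that $\gamma=\delta$ is degenerate. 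In short: same strategy as the paper, but your version is the complete one.
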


\begin{proof}
We can write the characteristic polynomial for $L+(\omega^r-1) E +(\omega^s-1)F$,
where $L$ is the linearization of \eqref{FHN} about the origin, as
\begin{equation}\label{multiplicity_f}
        f(\lambda,r,s)=\   \lambda^2+\lambda\left[a-\gamma\left(1-\omega^{r}\right) -\delta\left(1-\omega^{s}\right)       \right]+b     \qquad 1\leqslant r,s\leqslant N.
\end{equation}
We start by showing that if two of these polynomials have one root in common, then they  are identical.

Indeed, let $\phi_1(\lambda)$ and $\phi_2(\lambda)$ be two polynomials of the form \eqref{multiplicity_f} and suppose they share one root, say $\lambda=p+iq,$ while the remaining roots are $\lambda=p_1+iq_1$ for $\phi_1$ and $\lambda=p_2+iq_2$ for $\phi_2$.
Since $b\ne 0$, then none of these roots is zero.
Then we can write
$$
\begin{array}{ll}
\phi_j(\lambda)&=\left(\lambda-\left(p+iq\right)\right) \left(\lambda-\left(p_j+iq_j\right)\right)\\
&=
\lambda^2-\lambda\left(\left(p+p_j\right)+i\left(q+q_j\right)\right)+\left(p+iq\right)\left(p_j+iq_j\right)
\end{array}
$$
and therefore
 $\left(p+iq\right)\left(p_1+iq_1\right)=b =\left(p+iq\right)\left(p_2+iq_2\right)$,
so, as $p+iq\ne 0$, then  $\left(p_1+iq_1\right)=\left(p_2+iq_2\right)$ and therefore
$\phi_1(\lambda)=\phi_2(\lambda)$.
Since this is valid for any pair of polynomials of the family, it only remains to show that for generic
$\gamma,\delta$ the polynomials do not coincide.

Two polynomials $f(\lambda,r,s)$ and  $f(\lambda,\tilde{r},\tilde{s})$   of the form \eqref{multiplicity_f} coincide if and only if
\begin{equation}\label{condition}
\gamma\left(\omega^{r}-\omega^{\tilde{r}}\right) =
\delta\left(\omega^{\tilde{s}}-\omega^{s}\right).
\end{equation}
Thus, for $(\gamma,\delta)$ outside a finite number of lines defined by \eqref{condition}
all the eigenvalues of  the  linearization of \eqref{nFHN_2d}  around the origin have multiplicity $1$, as we wanted to show.
\end{proof}

\begin{priteo}\label{ThFirstHopf}
For $c=0$, $b>0$ and for any $\gamma$ and $\delta$ with $\gamma\delta\neq 0$
 the origin is the only equilibrium of  \eqref{nFHN_2d}.
 For each value of $\gamma$ and $\delta$ there exists $a_*\geqslant 0$ such that
 for $a\geqslant a_*$
 the origin is asymptotically stable.
 The stability of the origin changes at $a=a_*$, where it undergoes a Hopf bifurcation
 with respect to the bifurcation parameter $a$, into a 
 periodic solution.
The spatial symmetries of the bifurcating solution and the values of $a_*$ are given in Table~\ref{tabelaBifurca}.
Moreover, if  the coupling is associative, i.e, if both $\gamma<0$ and $\delta<0$,  the bifurcating solution is stable and the bifurcation is subcritical.
\end{priteo}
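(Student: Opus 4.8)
The plan is to handle the four assertions in order, with the eigenvalue formula of Theorem~\ref{ThEigs} doing most of the work. First I would dispose of the uniqueness of the equilibrium. With $c=0$ the second line of \eqref{nFHN_2d} reads $\dot y_{\alpha,\beta}=bx_{\alpha,\beta}$, so $b>0$ forces $x_{\alpha,\beta}=0$ at any equilibrium; substituting $x_{\alpha,\beta}=0$ into the first line kills both the cubic and the two coupling terms and leaves $y_{\alpha,\beta}=0$. Hence the origin is the only equilibrium, for every $\gamma,\delta$ with $\gamma\delta\neq0$.

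For the stability threshold I would argue block by block. Setting $c=0$ in \eqref{eigvals}, each $2\times2$ block $D+\omega^rE+\omega^sF$ has determinant $b>0$ and trace $\tau_{r,s}=-a+\gamma(1-\omega^r)+\delta(1-\omega^s)$, so that $\mathrm{Re}(\tau_{r,s})=-a+g(r,s)$ where $g(r,s)=\gamma\bigl(1-\cos\tfrac{2\pi r}{N}\bigr)+\delta\bigl(1-\cos\tfrac{2\pi s}{N}\bigr)$. Because the determinant $b$ is real and positive, a block eigenvalue can lie on the imaginary axis only when $\mathrm{Re}(\tau_{r,s})=0$ (if $\lambda_+=ip$ then $\lambda_-=b/\lambda_+=-ib/p$), and $\lambda=0$ is excluded since $b\neq0$. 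Thus no eigenvalue crosses the axis while $\mathrm{Re}(\tau_{r,s})\neq0$; comparing with the stable regime $a\to+\infty$ shows the block is stable exactly when $a>g(r,s)$, whereas for $a<g(r,s)$ one has $\mathrm{Re}\,\lambda_++\mathrm{Re}\,\lambda_-=\mathrm{Re}(\tau_{r,s})>0$ and at least one eigenvalue is unstable. The origin is therefore asymptotically stable iff $a>a_*:=\max_{(r,s)}g(r,s)$, and since $g(0,0)=0$ we obtain $a_*\geq0$. Transversality follows from $\tfrac{d}{da}\mathrm{Re}(\tau_{r,s})=-1$, and at $a=a_*$ the critical block eigenvalues are $\tfrac12\, i\bigl(T_I\pm\sqrt{T_I^2+4b}\bigr)$ with $T_I=\mathrm{Im}(\tau_{r,s})$, purely imaginary and nonzero, so the crossing is a Hopf crossing.

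To populate Table~\ref{tabelaBifurca} I would maximise $g$ coordinatewise. The factor $1-\cos\tfrac{2\pi r}{N}$ attains its maximum $1+\cos\tfrac{\pi}{N}$ at $r=\tfrac{N\pm1}{2}$ and its minimum $0$ at $r=0$; hence for $\gamma<0$ the $r$-maximiser is $r=0$ and for $\gamma>0$ it is $r=\tfrac{N\pm1}{2}$, and likewise in $s$ with $\delta$. This selects the mode $(0,0)$ (full synchrony) for associative--associative coupling, and a mode on a coordinate axis or on the (anti)diagonal in the mixed and dissociative--dissociative cases; reading the isotropy data $\mathbf{k}$ and $\widetilde{\mathbb{Z}}_N(\mathbf{k^\perp})$ off Table~\ref{tableAbelianHopf} then gives the spatial symmetry $K$ and the value $a_*=g(r_*,s_*)$ in each row. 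I expect the genuinely delicate point to be the dissociative cases: there the critical block has $T_I\neq0$, and because $\det=b$ is real its two eigenvalues (together with their conjugates from the mode $(N-r,N-s)$) cross the axis simultaneously, so the centre subspace is larger than two--dimensional and Theorem~\ref{aHFG} does not apply verbatim. I would then recover the periodic branch by restricting the flow to the invariant subspace $\fix(K)$ of the predicted ring symmetry, on which the reduced bifurcation is a simple Hopf.

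Finally, for the associative case $\gamma,\delta<0$ the maximiser is the isolated mode $(0,0)$, with $g(r,s)<0$ for every other $(r,s)$; hence at $a_*=0$ the two--dimensional synchrony subspace $\mathbf{V}_{(0,0)}\oplus\mathbf{V}_{(0,0)}$ is the centre subspace and all remaining eigenvalues are strictly stable. Since this subspace is flow--invariant and the coupling vanishes on it, the reduced dynamics is exactly the single Van der Pol--like equation $\dot x=x(a-x)(x-1)-y,\ \dot y=bx$. I would compute its first Lyapunov coefficient at $a=0$ to read off the criticality, and deduce the stability of the bifurcating orbit from the centre--manifold reduction together with the hyperbolic stability of the transverse directions. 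I expect this Lyapunov--coefficient computation --- and, should ``stable and subcritical'' refer to a branch that folds into the large--amplitude relaxation oscillation, the accompanying global planar analysis --- to be the main analytic obstacle; everything else is bookkeeping with the eigenvalue formula and the isotropy tables.
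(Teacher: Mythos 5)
Your handling of the equilibrium, of the threshold $a_*$ (via $\det=b>0$ and the sign of $\mathrm{Re}\,\tau_{r,s}$, which is a slightly cleaner route to the same conclusion as the paper's Lemma~\ref{LemaRealPart}), of the coordinatewise maximisation giving Table~\ref{tabelaBifurca}, and of the associative case (reduction to the flow-invariant plane $\mathbf{V}_{(0,0)}\oplus\mathbf{V}_{(0,0)}$ plus a Lyapunov-coefficient computation --- the paper computes $s^*=-3/8$ and settles criticality with a Dulac function, so no global relaxation-oscillation analysis is needed) all match the paper. The genuine gap is in the dissociative cases, precisely at the point you flag as delicate. You propose to resolve the four-dimensional centre subspace by restricting to $\fix(K)$, ``on which the reduced bifurcation is a simple Hopf.'' It is not. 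Take $\gamma>0>\delta$: the critical modes are $(r_\pm,0)$ with $r_\pm=(N\pm1)/2$, and since $\mathbf{V}_{(r_+,0)}=\mathbf{V}_{(N-r_+,0)}=\mathbf{V}_{(r_-,0)}$, all four critical eigenvalues $\lambda_{(r_\pm,0)\pm}$ lie in the single four-dimensional isotypic component $\mathbf{V}_{(r_+,0)}\oplus\mathbf{V}_{(r_+,0)}$, which is entirely contained in $\fix\bigl(\widetilde{\mathbb{Z}}_N(0,1)\bigr)$. Because $N$ is prime there is no intermediate subgroup, hence no smaller fixed-point (flow-invariant) subspace separating the two conjugate pairs; on $\fix(K)$ you still face a double Hopf with the two distinct frequencies $\tfrac12\bigl(\sqrt{\gamma^2\sin^2\theta_N+4b}\pm\gamma\sin\theta_N\bigr)$.

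What the paper does instead is exploit that these two frequencies are unequal and apply the Golubitsky--Langford results \cite{GoLangf} to the pair with the largest imaginary part, $\lambda_{(r_+,0)+}=\overline{\lambda_{(r_-,0)-}}$: no eigenvalue can equal $k\lambda_{(r_+,0)+}$ for an integer $k\geqslant 2$, so that pair is automatically non-resonant and yields a branch of small-amplitude periodic solutions with the stated symmetry; the second pair may be resonant (when $\gamma^2\sin\theta_N=\frac{(k-1)^2}{k}b$), and otherwise gives a second independent branch, with the analogous statement (up to four branches) in the doubly dissociative case. To close your argument you need to import this non-resonant multiple-Hopf step (or an equivalent Lyapunov--Schmidt/normal-form argument on the four-dimensional centre manifold); the restriction to $\fix(K)$ alone does not reduce the problem to a two-dimensional Hopf bifurcation.
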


\begin{table}[h]
\caption{Details of Hopf bifurcation on the parameter $a$ for Theorem~\ref{ThFirstHopf}.
Solutions bifurcate at $a=a_*$ (where $\theta_N=\frac{(N-1)\pi}{N}$) with spatial symmetry $K$ and spatio-temporal symmetry
$\mathbb{Z}_N\times\mathbb{Z}_{N}$.}\label{tabelaBifurca}
\begin{center}\begin{tabular}{cccc}
\toprule
sign$(\gamma)$& sign$(\delta)$& $a_*$ & $K$\\
\midrule
-&-&0&$\mathbb{Z}_N\times\mathbb{Z}_{N}$\\
+&-&$\gamma\left(1-\cos\theta_N\right)$&$\widetilde{\mathbb{Z}}_N\left(0,1\right)$\\
-&+&$\delta\left(1-\cos\theta_N\right)$&$\widetilde{\mathbb{Z}}_N\left(1,0\right)$\\
+&+&$(\gamma+\delta)\left(1-\cos\theta_N\right)$&$\widetilde{\mathbb{Z}}_N\left(\frac{N-1}{2},\frac{N-1}{2}\right)$\\
\bottomrule
\end{tabular}\end{center}
\end{table}

The first step is to determine the stability of the origin. To do this, we need estimates for the real part  of the eigenvalues \eqref{eigvals}. This is done in the next Lemma.

\begin{lema}\label{LemaRealPart}
Let $A(r,s)=-a+\gamma\left(1-\omega^r\right)+ \delta\left(1-\omega^s\right)$.
For $c=0$, $b>0$, $\gamma\delta\neq 0$
and for all $(r,s)$
 we have
$\mathrm{Re}\ \lambda_{\left(r,s\right)-}\leqslant \frac{1}{2} \mathrm{Re}\ A(r,s)$.
If $\mathrm{Re}\ A(r,s)\geqslant 0$ then
$\mathrm{Re}\ \lambda_{\left(r,s\right)+}\leqslant  \mathrm{Re}\ A(r,s)$,
otherwise $\mathrm{Re}\ \lambda_{\left(r,s\right)+}<0$.
\end{lema}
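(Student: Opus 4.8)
The plan is to use that the hypothesis $c=0$ collapses the eigenvalue formula \eqref{eigvals} to
\[
\lambda_{(r,s)\pm}=\tfrac12 A(r,s)\pm\tfrac12\sqrt{A(r,s)^2-4b},
\]
since with $c=0$ both the linear part and the expression under the root are governed by the single quantity $A(r,s)=-a+\gamma(1-\omega^r)+\delta(1-\omega^s)$. Abbreviating $A=A(r,s)=P+iQ$ with $P=\mathrm{Re}\,A$, and writing $W=\sqrt{A^2-4b}$ for the principal square root, the whole lemma becomes a matter of controlling $\mathrm{Re}\,W$ against $P$. I would first record two elementary facts: (i) the principal square root maps into the closed right half-plane, so $\mathrm{Re}\,W\geqslant0$; and (ii) for every complex $W$ one has $(\mathrm{Re}\,W)^2=\tfrac12(|W|^2+\mathrm{Re}(W^2))$.

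The bound on $\lambda_{(r,s)-}$ is then immediate: since $\lambda_{(r,s)-}=\tfrac12 A-\tfrac12 W$, fact (i) gives $\mathrm{Re}\,\lambda_{(r,s)-}=\tfrac12 P-\tfrac12\mathrm{Re}\,W\leqslant\tfrac12 P$ for all $(r,s)$, with no extra hypothesis. For the statements about $\lambda_{(r,s)+}=\tfrac12(A+W)$ I would observe that ``$\mathrm{Re}\,\lambda_{(r,s)+}\leqslant P$ when $P\geqslant0$'' is equivalent to $\mathrm{Re}\,W\leqslant P$, and ``$\mathrm{Re}\,\lambda_{(r,s)+}<0$ when $P<0$'' is equivalent to $\mathrm{Re}\,W<-P$. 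In both cases the right-hand side is non-negative, so by (i) I may square. Substituting $W^2=A^2-4b$ and $\mathrm{Re}(A^2-4b)=P^2-Q^2-4b$ into (ii), each squared inequality simplifies, using $|A|^2=P^2+Q^2$, to the same inequality $|A^2-4b|\leqslant|A|^2+4b$ (with strict sign in the second case).

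This last inequality is nothing but the triangle inequality $|A^2-4b|\leqslant|A^2|+|{-4b}|=|A|^2+4b$, valid because $b>0$; this settles the case $P\geqslant0$. The only delicate point, which I expect to be the crux, is the strict inequality required when $P<0$: equality in the triangle inequality forces $A^2$ and $-4b$ to have equal argument, i.e.\ $A^2$ to be a non-positive real number, which occurs exactly when $A$ is purely imaginary or zero. Since $P=\mathrm{Re}\,A<0$ excludes this, the inequality is strict, giving $\mathrm{Re}\,W<-P$ and hence $\mathrm{Re}\,\lambda_{(r,s)+}<0$. I would close by noting that only $c=0$ (producing the clean quadratic) and $b>0$ (giving the triangle-inequality form) are actually used, so the estimate holds uniformly in $(r,s)$.
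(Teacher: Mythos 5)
Your proof is correct and follows essentially the same route as the paper: both reduce to the quadratic formula at $c=0$, use the identity $(\mathrm{Re}\,W)^2=\tfrac12(|W|^2+\mathrm{Re}(W^2))$ for the principal square root, and establish the $\lambda_+$ bound via the inequality $|A^2-4b|\leqslant|A|^2+4b$ with equality exactly when $\mathrm{Re}\,A=0$ (the paper writes this same inequality in the equivalent form $|\eta^2-4b|+\mathrm{Re}(\eta^2-4b)\leqslant 2(\mathrm{Re}\,\eta)^2$). Your repackaging of the key estimate as the triangle inequality, with its standard equality case, is a slightly more transparent justification of the step the paper merely asserts, but it is the same argument.
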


\begin{proof}[Proof of Lemma~\ref{LemaRealPart}]
In order to evaluate the  real and imaginary parts of eigenvalues $\lambda_{\left(r,s\right)\pm}$,
we need to rewrite equation \eqref{eigvals} by getting rid of the square root.
For this purpose, we use a well known result from elementary algebra; we have that if
$\eta=a_1+ib_1$, where $a_1$ and $b_1$ are real, $b_1\ne 0$, then the real and imaginary parts of
$\sqrt{\eta}=\sqrt{a_1+ib_1}$ are given by
\begin{equation}\label{RaizEta}
	\mathrm{Re}\ \sqrt{\eta}
        =\sqrt{\frac{\left|\eta\right|+a_1}{2}}
        \qquad
        \mathrm{Im} \ \sqrt{\eta}
       ={\mathrm{sgn}\left(b_1\right)}\sqrt{\frac{\left|\eta\right|-a_1}{2}}.
\end{equation}
A direct application of \eqref{eigvals} in Theorem~\ref{ThEigs} to the case $c=0$ yields, if
$A^2\not\in\mathbb{R}$ and for $\varepsilon_1=\pm 1$
\begin{equation}\label{Real}
\mathrm{Re}\ \lambda_{\left(r,s\right)\varepsilon_1}=
\frac{1}{2}\left(\mathrm{Re}\ A+\varepsilon_1 \sqrt{\frac{\left|A^2-4b \right| +\mathrm{Re}\ (A^2-4b)} {2}}\right)
\end{equation}
\begin{equation}\label{Imaginary}
\mathrm{Im}\ \lambda_{\left(r,s\right)\varepsilon_1}=
\frac{1}{2}\left(\mathrm{Im}\ A+\varepsilon_1{\mathcal {S}} \sqrt{\frac{\left|A^2-4b \right| -\mathrm{Re}\ (A^2-4b)} {2}}\right)
\end{equation}
with $A=A(r,s)$
and ${\mathcal {S}} = \mathrm{sgn} \left(\mathrm{Im}\left(A^2\right)\right)$.

The statement for $\lambda_{\left(r,s\right)-}$ follows immediately from \eqref{Real}.
For $\lambda_{\left(r,s\right)+}$, note that, for $b>0$ and any $\eta\in\mathbb{C}$, we have
$$
\left|\eta^2-4b \right| +\mathrm{Re}\ (\eta^2-4b)\leqslant 2\left(\mathrm{Re}\ \eta\right)^2
$$
with equality holding if and only if  $\mathrm{Re}\ \eta=0$, when the expressions are identically zero.
Hence, taking $\eta=A(r,s)$, we obtain from \eqref{Real}:
$$
\mathrm{Re}\ \lambda_{\left(r,s\right)+}\leqslant \frac{1}{2}
\left( \mathrm{Re}\ A(r,s)+\left| \mathrm{Re}\ A(r,s)\right|\right)
$$
and the result follows.
\end{proof}

The particular case of fully synchronised solutions in Theorem~\ref{ThFirstHopf} is treated in the next Lemma. 
This case is simpler since the bifurcation takes place inside the subspace
$\mathbf{V}_{(0,0)}\oplus\mathbf{V}_{(0,0)}$.

\begin{lema}\label{HopfSynchro}
For $c=0$, $b>0$ and $\gamma\delta\neq 0$  the origin is the only equilibrium of  \eqref{nFHN_2d} and  at $a=0$ it undergoes a Hopf bifurcation,  subcritical with respect to the bifurcation parameter $a$, to a fully synchronised periodic solution.
If both $\gamma<0$ and $\delta<0$, the origin is asymptotically stable for $a>0$ and the bifurcating solution is stable.
Otherwise, the periodic solution is unstable.
\end{lema}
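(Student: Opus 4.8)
The plan is to reduce the entire question to the two-dimensional dynamics inside the full-synchrony subspace and then to control the transverse directions with the eigenvalue estimates already available. First I would settle equilibrium uniqueness: with $c=0$ the second equation of \eqref{nFHN_2d} reads $bx_{\alpha,\beta}=0$, so $b>0$ forces $x_{\alpha,\beta}=0$ for every cell, and substituting into the first equation (where the coupling terms cancel identically) gives $y_{\alpha,\beta}=0$; hence the origin is the only equilibrium. Next I would observe that on $\mathbf{V}_{(0,0)}\oplus\mathbf{V}_{(0,0)}=\fix(\mathbb{Z}_N\times\mathbb{Z}_N)$ all cells carry the same value, the coupling terms $\gamma(x_{\alpha,\beta}-x_{\alpha+1,\beta})$ and $\delta(x_{\alpha,\beta}-x_{\alpha,\beta+1})$ vanish, and the restricted system is exactly the single planar FitzHugh--Nagumo equation \eqref{FHN} with $c=0$. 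Since this subspace is flow-invariant by equivariance, the bifurcation analysis there is genuinely two-dimensional.

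For the Hopf bifurcation itself I would use the $(r,s)=(0,0)$ eigenvalues \eqref{eigvalsFHN1}, which at $c=0$ become $\lambda_\pm=\frac12(-a\pm\sqrt{a^2-4b})$. At $a=0$ they equal $\pm i\sqrt b$, a simple purely imaginary pair because $b>0$, and $\frac{d}{da}\mathrm{Re}\,\lambda_\pm|_{a=0}=-\frac12\neq0$ supplies the eigenvalue-crossing condition. This gives a Hopf bifurcation at $a=0$ taking place inside $\fix(\mathbb{Z}_N\times\mathbb{Z}_N)$; by the first row of Table~\ref{tableAbelianHopf} the emerging branch is fully synchronised, with $K=H=\mathbb{Z}_N\times\mathbb{Z}_N$. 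To fix the direction of the branch and its stability \emph{within} the synchrony subspace I would put the linear part of the reduced planar equation in standard rotational form (rescaling $y=\sqrt{b}\,\eta$) and compute the first Lyapunov coefficient from the quadratic term $x^2$ and the cubic term $-x^3$ that survive at $a=0$; its sign determines the criticality and whether the bifurcating orbit attracts tangentially.

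The stability statement then follows by splitting the linearisation along the periodic orbit into its tangential part (inside the synchrony subspace, governed by the Lyapunov coefficient just computed) and its transverse part (governed, near $a=0$, by the eigenvalues of the remaining modes $(r,s)\neq(0,0)$). If both $\gamma<0$ and $\delta<0$, then for every $a\geq0$ and every mode one has $\mathrm{Re}\,A(r,s)=-a+\gamma(1-\cos\tfrac{2\pi r}{N})+\delta(1-\cos\tfrac{2\pi s}{N})\leq -a\leq0$, with strict inequality for $(r,s)\neq(0,0)$ at $a=0$; Lemma~\ref{LemaRealPart} then yields $\mathrm{Re}\,\lambda_{(r,s)\pm}<0$, so the origin is asymptotically stable for $a>0$ and all transverse directions are attracting near the bifurcation, making the synchronised orbit stable. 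If instead one of $\gamma,\delta$ is positive, say $\gamma>0$, I would exhibit instability through the transverse mode $(r,0)$ with $r\neq0$: its characteristic polynomial $\lambda^2-\gamma(1-\omega^r)\lambda+b$ has root-sum with real part $\gamma(1-\cos\tfrac{2\pi r}{N})>0$, so one eigenvalue has positive real part, and by continuity this persists for $a$ near $0$, producing an unstable transverse Floquet exponent and hence an unstable periodic solution.

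The main obstacle will be the criticality computation. The nonlinearity at $a=0$ carries both a quadratic ($x^2$) and a cubic ($-x^3$) contribution, so the sign of the first Lyapunov coefficient must be extracted with care, the quadratic term entering through the usual product corrections in the normal-form formula; this is the step that actually decides the behaviour of the branch inside the synchrony subspace. Everything transverse, by contrast, is handed to us cheaply by Lemma~\ref{LemaRealPart} together with the trace argument above.
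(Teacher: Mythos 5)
Your route coincides with the paper's in every structural step: equilibrium uniqueness from $bx_{\alpha,\beta}=0$ when $c=0$, reduction to the flow-invariant synchrony plane where the dynamics is the uncoupled equation \eqref{FHN}, the simple purely imaginary pair $\pm i\sqrt{b}$ at $a=0$ crossing with speed $-\tfrac12$, and the transverse analysis via $\mathrm{Re}\,A(r,s)$ and Lemma~\ref{LemaRealPart} (your trace argument for the unstable case, using that the root-sum of $\lambda^2-\gamma(1-\omega^r)\lambda+b$ has positive real part when $\gamma>0$, is a slightly more elementary version of the paper's observation that $\mathrm{Re}\,\lambda_{(1,0)+}>0$ at $a=0$). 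The one genuine difference of method is how criticality is pinned down. The paper first applies Dulac's criterion with the multiplier $\varphi(y)=\exp(-2y/b)$, showing that \eqref{FHN} has no periodic orbits for $0\leqslant a\leqslant 3$; this forces the branch onto the side $a<0$ \emph{before} any normal-form computation, and the Guckenheimer--Holmes coefficient is then evaluated only to certify non-degeneracy and tangential stability. You propose instead to read both the direction of the branch and its tangential stability off the first Lyapunov coefficient alone, which is logically sufficient but loses the independent cross-check that the Dulac argument provides.

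The point at which your write-up falls short of a proof is precisely the step you flag as the main obstacle: you never determine the sign of the Lyapunov coefficient, and the conclusions ``subcritical'' and ``stable within $\mathbf{V}_{(0,0)}\oplus\mathbf{V}_{(0,0)}$'' rest entirely on that sign. The computation is less delicate than you anticipate: after the rescaling $y\mapsto\sqrt{b}\,y$ that puts the linear part in rotational form, the reduced system at $a=c=0$ has $f(x,y)=-x^3+x^2$ and $g\equiv 0$, so in the standard formula all the mixed and quadratic product corrections vanish (since $f_{xy}=0$ and $g\equiv 0$) and one is left with $16s^*=f_{xxx}=-6$, i.e.\ $s^*=-\tfrac38<0$. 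Combined with $\partial\,\mathrm{Re}\,\lambda_{(0,0)\pm}/\partial a=-\tfrac12$ this gives exactly the claimed subcritical, tangentially stable branch. With that sign supplied, your argument is complete and equivalent to the paper's.
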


\begin{proof}[Proof of Lemma~\ref{HopfSynchro}]
Inspection of \eqref{nFHN_2d} when $c=0$ shows that the only equilibrium is the origin.

The restriction of  \eqref{nFHN_2d} to  the plane
$\mathbf{V}_{(0,0)}\oplus\mathbf{V}_{(0,0)}$
obeys the  uncoupled equations \eqref{FHN} whose  linearisation  around the origin has eigenvalues given by  \eqref{eigvalsFHN1}.
It follows that, within $\mathbf{V}_{(0,0)}\oplus\mathbf{V}_{(0,0)}$, the origin is asymptotically stable for $a>0$, unstable for $a<0$. The linearisation has purely imaginary eigenvalues at $a=0$.

Consider the positive function $\varphi(y)=\exp(-2y/b)$. Then, for  the  uncoupled equations \eqref{FHN} we get
$$
\frac{\partial}{\partial x}\left(\varphi(y)\dot{x}\right)+\frac{\partial}{\partial y}\left(\varphi(y)\dot{y}\right)=
\left(-3x^2 +2ax -a \right)\varphi(y)
$$
which is always negative if  $0\le a\le 3$.
Hence,  by Dulac's criterion, the system  \eqref{FHN}  cannot have any periodic solutions,
and thus  if there is a Hopf bifurcation at $a=0$ inside  the plane $\mathbf{V}_{(0,0)}\oplus\mathbf{V}_{(0,0)}$ it  must be subcritical.

In order to show that indeed there is  a Hopf bifurcation we apply the criterium of \cite[Theorem 3.4.2]{guck} and evaluate
$$
\begin{array}{ll}
        16s^*=&f_{xxx}+f_{xyy}+g_{xxy}+g_{yyy}\\
        \\
       &\displaystyle{+\frac{1}{\sqrt{b}}\left[f_{xy}\left(f_{xx}+f_{yy}\right)-g_{xy}\left(g_{xx}+g_{yy}\right)-f_{xx}g_{xx}+f_{yy}g_{yy}\right]},
\end{array}
$$
where $f(x,y)=f_1(x,\sqrt{b}y)-\sqrt{b}y$, $g(x,y)=f_2(\sqrt{b}x,y)-\sqrt{b}x$ and
$f_{xy}$ denotes $\displaystyle\frac{\partial^2 f}{\partial x\partial y}(0,0)$, etc.
Since, for $a=c=0$, we have $f(x,y)=-x^3+x^2$ and $g(x,y)=0$, this yields
$s^*=-\frac{3}{8}$. The Hopf bifurcation is not degenerate and the bifurcating  periodic solution  is stable within $\mathbf{V}_{(0,0)}\oplus\mathbf{V}_{(0,0)}$.
Since $\partial \mathrm{Re}\ \lambda_{\left(0,0\right)\pm}/\partial a=-1/2$ the bifurcation is indeed subcritical with respect to the bifurcation parameter $a$.

It remains to discuss the global stability, with respect to initial conditions outside
 $\mathbf{V}_{(0,0)}\oplus\mathbf{V}_{(0,0)}$.
If $\gamma>0$ then from the expression \eqref{Real} in the proof of Lemma~\ref{LemaRealPart} at $a=0$ we obtain
$\mathrm{Re}\ \lambda_{\left(1,0\right)+}>0$ and the bifurcating periodic solution is unstable. A similar argument holds for $\delta>0$.

If both $\gamma<0$ and $\delta<0$, then, for  $a=0$, we get
$\mathrm{Re}\ A(r,s)<0$  for $(r,s)\neq (0,0)$.
Hence, by Lemma~\ref{LemaRealPart}, all the eigenvalues $\lambda_{\left(r,s\right)\pm}$,  $(r,s)\neq (0,0)$ have negative real parts and the bifurcating solution is stable.
\end{proof}
\medbreak

\begin{proof}[Proof of Theorem~\ref{ThFirstHopf}]
The case  of associative coupling $\gamma<0$ and $\delta<0$ having been treated in Lemma~\ref{HopfSynchro},
it remains to deal with the cases when either $\gamma$ or $\delta$ is positive.
Let $r_{\varepsilon_2}=\frac{N+\varepsilon_2}{2}$, $\varepsilon_2=\pm1$, $c=0$, $b>0$, 
$\theta_N=\frac{\pi(N-1)}{N}$, with $\sin\theta_N>0$, $\cos\theta_N<0$ and 
$\cos\theta_N<\cos\frac{ 2\alpha\pi}{N}$ for all $\alpha\in\mathbb{Z}$.

If $\gamma>0$, $\delta<0$ and
$a\geqslant \gamma\left(1-\cos\theta_N\right)=a_*$,
then for all $(r,s)\neq \left(r_{\varepsilon_2},0\right)$ we have
$\mathrm{Re}\ A(r,s)\leqslant 0$, with equality only if both $a=a_*$ and
$(r,s)=(r_{\varepsilon_2},0)$. Using \eqref{Real} and \eqref{Imaginary} we get
$$
A(r_{\varepsilon_2},0)=i\varepsilon_2\gamma\sin\theta_N;
$$
$$
\mathrm{Im}\lambda_{(r_{\varepsilon_2},0)\varepsilon_1}=
\frac{1}{2}\left(\varepsilon_2\gamma\sin\theta_N+
\varepsilon_1\sqrt{\gamma^2\sin^2\theta_N+4b}\right)
$$
with  $\lambda_{(r_+,0)+}=\overline{\lambda_{(r_-,0)-}}$ and
$\lambda_{(r_-,0)+}=\overline{\lambda_{(r_+,0)-}}$. In addition
$\mathrm{Im}\lambda_{(r_+,0)+}>\mathrm{Im}\lambda_{(r_-,0)+}>0$ for $b>0$.

The results of Golubitsky and Langford \cite{GoLangf} are always applicable to the Hopf bifurcation for $\lambda_{(r_+,0)+}$, since there are no eigenvalues of the form $k\lambda_{(r_+,0)+}$ with $k\in \mathbb{N}$.
For the smaller imaginary part there may be resonances when $\lambda_{(r_+,0)+}=k\lambda_{(r_-,0)-}$ with $k\in \mathbb{N}$.
Otherwise, if the other non-degeneracy conditions hold, there are two independent Hopf bifurcations at $a=a_*$ i.e. two separate solution branches that bifurcate at this point.
The resonance condition may be rewritten as
$$
\gamma^2{\sin\theta_N}=\frac{(k-1)^2}{k}{b},\quad k\in\mathbb{N},~k\geqslant 2.
$$
The bifurcating solutions are stable if and only if the branches are subcritical.
The eigenspace corresponding to these branches lies in $V_{(r_{\pm},0)}\oplus V_{(r_{\pm},0)}\subset\mathrm{Fix}(\mathbb{\tilde{Z}}(0,1)).$

In the case $\gamma>0$, $\delta>0$, we have
$a_*=(\gamma+\delta)\left(1-\cos\theta_N\right)$.
For $a\geqslant a_*$ and for all $(r,s)$ we have
$\mathrm{Re}\ A(r,s)\leqslant 0$, and hence $\mathrm{Re}\ \lambda_{(r,s)\pm}\leqslant 0$
with equality holding only when both  $a=a_*$ and  $\lambda_{(r_{\pm},r_{\pm})+}$.
The eigenspace in this case lies in
$\mathbf{V}_{(r_{\pm},r_{\pm})}\oplus\mathbf{V}_{(r_{\pm},r_{\pm})}\subset
\fix\left(\widetilde{\mathbb{Z}}_N\left(\frac{N-1}{2},\frac{N-1}{2}\right)\right)$.
Then
$$
A(r_{\varepsilon_2},r_{\varepsilon_3})=
i(\varepsilon_2\gamma+\varepsilon_3\delta)\sin\theta_N
$$
$$
\lambda_{(r_{\varepsilon_2},r_{\varepsilon_3})_{\varepsilon_1}}=\frac{1}{2}\left(A(r_{\varepsilon_2},r_{\varepsilon_3})+
\varepsilon_1\sqrt{A^2(r_{\varepsilon_2},r_{\varepsilon_3})-4b}\right),
$$
with $\varepsilon_i=\pm1, i=\{1,2,3\}.$
Hence,
$\mathrm{Re}\lambda_{(r_{\varepsilon_2},r_{\varepsilon_3})\varepsilon_1}=0$ and
$$
\mathrm{Im}\lambda_{(r_{\varepsilon_2},r_{\varepsilon_3})\varepsilon_1}=
\frac{1}{2}\left((\varepsilon_2\gamma+\varepsilon_3\delta)\sin\theta_N+
\varepsilon_1\sqrt{(\varepsilon_2\gamma+\varepsilon_3\delta)^2\sin^2\theta_N+4b}\right)
$$
with 
$\lambda_{(r_{-\varepsilon_2},r_{-\varepsilon_3})-}
=\overline{\lambda_{(r_{\varepsilon_2},r_{\varepsilon_3)+}}}$.
 In addition
$\mathrm{Im}\lambda_{(r_{\varepsilon_2},r_{\varepsilon_3})\varepsilon_1}>0$ when
$\varepsilon_1=+1$ with $\mathrm{Im}\lambda_{(r_+,r_+)+}>
\mathrm{Im}\lambda_{(r_{\varepsilon_2},r_{\varepsilon_3})+}$ for 
$({\varepsilon_2},{\varepsilon_3})\ne (+1,+1)$ and  $b>0$.

Hence there is a non-resonant Hopf bifurcation corresponding to  $\lambda_{(r_+,r_+)+}$.
As  mentioned before, for the smaller imaginary parts there may be resonances when 
$\lambda_{(r_{\varepsilon_2},r_{\varepsilon_3})+}=k\lambda_{(r_+,r_+)+}$ with $k\in \mathbb{N}.$
Otherwise, there are four independent Hopf bifurcations at $a=a_*$ if other non-degeneracy conditions hold; in this case four separate solution branches  bifurcate at this point. The bifurcating solutions are stable if and only if the branches are subcritical.
\end{proof}

We have checked numerically the  non-degeneracy condition for bifurcation of the non-resonant branch using the  
formulas  of  Golubitsky and Langford \cite{GoLangf}. 
 The criticality of the bifurcation branch seems to depend on $N$. 
 For $\gamma>0$, $\delta>0$, $N\geqslant 11$, the bifurcating solution branch seems to be always subcritical, and hence stable. For $N=3,5,7$ it seems to be supercritical.
If $\delta\leqslant 0$, and $N\geqslant 11$,  the bifurcating branch seems to be subcritical for large  values of $\gamma>0$, supercritical otherwise.

\section{Bifurcation for $c>0$ small}\label{seccsmall}

In this section we extend the result of section~\ref{secBifciszero} for bifiurcations at small positive values of $c$.
We start with the case when both $\gamma$ and $\delta$ are negative.

\begin{coro}\label{CorolFirstHopf}
For small values of $c$, if $b>0$,  $\gamma<0$ and $\delta<0$ the origin is an equilibrium of  \eqref{nFHN_2d} and there is a neighbourhood of the origin containing no other equilibria.
There exists $\hat{a}$ near 0 such that for $a\geqslant \hat{a}$
 the origin is asymptotically stable.
 The stability of the origin changes at $a=\hat{a}$, where  it undergoes a Hopf bifurcation,  subcritical with respect to the bifurcation parameter $a$, into a stable  periodic solution
with  spatial symmetries $\mathbb{Z}_N\times\mathbb{Z}_{N}$.
\end{coro}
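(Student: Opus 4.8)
The plan is to treat Lemma~\ref{HopfSynchro} (the case $c=0$) as the unperturbed situation and to derive the corollary by a persistence argument as $c$ moves away from $0$. Everything needed is robust under small perturbations of $c$: invertibility of the linearisation at the origin, the signs of the real parts of the eigenvalues in \eqref{eigvals}, and the criticality coefficient of the planar Hopf bifurcation inside the synchrony subspace. First I would record that the origin is an equilibrium of \eqref{nFHN_2d} for every $c$, and that at $c=0$ each block $D+\omega^rE+\omega^sF$ has determinant $b\neq0$, so the linearisation $M$ is invertible. Since the eigenvalues of $M$ depend continuously on $(a,c)$, $M$ stays invertible for $(a,c)$ near $(0,0)$; by the inverse function theorem this makes the origin an isolated equilibrium, producing the neighbourhood free of other equilibria. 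No zero eigenvalue can appear, since a zero of a block would force the complex number $A(r,s)=b/c$ to be real and large, impossible for $\gamma,\delta<0$ and $a$ near $0$.

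Next I would locate the bifurcation. The synchrony subspace $\mathbf{V}_{(0,0)}\oplus\mathbf{V}_{(0,0)}=\fix(\mathbb{Z}_N\times\mathbb{Z}_N)$ is flow-invariant and carries the $(0,0)$ mode, whose eigenvalues from \eqref{eigvals} have real part $-(a+c)/2$; hence they cross the imaginary axis transversally, with $\partial\,\mathrm{Re}\,\lambda_{(0,0)\pm}/\partial a=-\tfrac12$, exactly at $\hat a=-c$, which is near $0$. For each of the finitely many modes $(r,s)\neq(0,0)$, Lemma~\ref{HopfSynchro} (through Lemma~\ref{LemaRealPart}) gives $\mathrm{Re}\,\lambda_{(r,s)\pm}<0$ strictly at $(a,c)=(0,0)$; by continuity these strict inequalities survive for $(a,c)$ near $(0,0)$, in particular at $a=\hat a$ and slightly above. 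Thus for $a\geq\hat a$ every transverse eigenvalue has negative real part while the synchrony pair has real part $-(a+c)/2$, so the origin loses stability precisely at $a=\hat a$.

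The restriction of \eqref{nFHN_2d} to $\fix(\mathbb{Z}_N\times\mathbb{Z}_N)$ is the planar system \eqref{FHN}, so the whole bifurcation takes place inside this subspace; the bifurcating orbit is therefore pointwise fixed by the group, giving spatial symmetry $K=\mathbb{Z}_N\times\mathbb{Z}_N$ as in the first row of Table~\ref{tableAbelianHopf}. The Hopf point of \eqref{FHN} is $a=-c$ with frequency $\sqrt{b-c^2}$, and the criticality coefficient $s^*(c)$ produced by the formula of \cite{guck} depends smoothly on $c$ (the bifurcation point, the frequency and the normalising coordinate change are all smooth in $c$ for $c^2<b$). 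Since Lemma~\ref{HopfSynchro} gives $s^*(0)=-\tfrac38\neq0$, I conclude $s^*(c)<0$ for small $c$: the bifurcation is nondegenerate, subcritical in $a$ (as $\partial\,\mathrm{Re}\,\lambda_{(0,0)\pm}/\partial a<0$), and the bifurcating periodic solution is stable inside $\fix(\mathbb{Z}_N\times\mathbb{Z}_N)$. The same sign makes the origin a stable weak focus at $a=\hat a$, so it is asymptotically stable for all $a\geq\hat a$, not merely for $a>\hat a$. Transverse stability of the orbit then follows because its Floquet exponents in the directions complementary to $\fix(\mathbb{Z}_N\times\mathbb{Z}_N)$ are close, for $a$ near $\hat a$, to the strictly negative transverse eigenvalues; combined with stability inside the subspace this gives asymptotic stability of the periodic solution.

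The main obstacle is the persistence of the criticality, i.e. keeping $s^*(c)<0$. The smooth-dependence argument above is the crux and is what I would rely on; a fully explicit alternative would re-run the computation of \cite{guck} for \eqref{FHN} with $c>0$ and check $s^*(c)\to-\tfrac38$ as $c\to0$, but this routine computation is rendered unnecessary by the smoothness argument. I would also be careful that the asymptotic stability of the origin \emph{at} the bifurcation value $a=\hat a$ is not a linear statement but rests on the same coefficient $s^*(c)<0$, so both assertions of the corollary are governed by the single sign condition inherited from the $c=0$ case.
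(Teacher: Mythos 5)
Your proposal is correct and follows essentially the same route as the paper: both treat Lemma~\ref{HopfSynchro} as the unperturbed case and argue by persistence — non-singularity of the linearisation plus the implicit/inverse function theorem for uniqueness of the equilibrium, simplicity of the purely imaginary pair and strict negativity of the remaining eigenvalues surviving by continuity, and smooth dependence of the non-degeneracy (criticality) coefficient on $c$. Your write-up is in fact more explicit than the paper's (identifying $\hat a=-c$, the transverse Floquet argument, and stability at $a=\hat a$ itself), but the underlying argument is the same.
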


\begin{proof}
The eigenvalues of $Df(\mathbf{0})$ are all non-zero at $c=0$, as shown in the proof of Lemma~\ref{HopfSynchro}.
Hence $Df(\mathbf{0})$ is non-singular and the implicit function theorem
ensures that,  for small values of $c$, there is a unique  equilibrium close to the origin.
From the symmetry it follows that this equilibrium is the origin.

If both $\gamma<0$ and $\delta<0$, then it follows from the proof of Lemma~\ref{HopfSynchro} that 
for $c=0$ the purely imaginary eigenvalues at $a=a_*=0$ are simple. 
Continuity of the eigenvalues ensures the persistence of the purely imaginary pair for $c\ne 0$ at nearby values of $a$ .
Hence the corresponding eigenvectors depend smoothly on $c$,
and the non-degeneracy conditions persist for small values of $c$.
\end{proof}

The cases when either $\gamma>0$ or $\delta>0$, are treated in the next proposition.
\begin{prop}\label{propCpositive}
For small values of $c>0$, if $b>0$ and $\gamma\delta\neq 0$
 the origin is an equilibrium of  \eqref{nFHN_2d} and there is a neighbourhood of the origin containing no other equilibria.
For $a>a_*$, where $a_*$ has the value of Table~\ref{tabelaBifurca}, the origin is asymptotically stable.
 For  almost all values of  $\gamma\neq 0 $ and $\delta\neq 0$,
 The stability of the origin changes at $a=\hat{a}<a_*$, with $\hat{a}$, near $a_*$, where  it undergoes a non-resonant Hopf bifurcation into a  periodic solution
having the spatial symmetries of Table~\ref{tabelaBifurca}.
If  the bifurcation is subcritical with respect to the bifurcation parameter $a$, then the bifurcating periodic solution is stable.
\end{prop}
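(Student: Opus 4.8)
The plan is to derive Proposition~\ref{propCpositive} as a perturbation of the $c=0$ result, Theorem~\ref{ThFirstHopf}, by following the critical eigenvalue through the explicit formula~\eqref{eigvals}. Write $(r^*,s^*)$ for the mode singled out in the proof of Theorem~\ref{ThFirstHopf} --- that is $(r_+,0)$, $(0,r_+)$ or $(r_+,r_+)$ according to the signs of $\gamma,\delta$ --- whose eigenvalue $\lambda_{(r^*,s^*)+}$ has the largest imaginary part among the modes that become neutral at $a=a_*$. The equilibrium statement follows exactly as in Corollary~\ref{CorolFirstHopf}: the origin solves \eqref{nFHN_2d} for every $c$, and the block $D+\omega^r E+\omega^s F$ has determinant $b-cA(r,s)$ (with $A(r,s)=-a+\gamma(1-\omega^r)+\delta(1-\omega^s)$ as in Lemma~\ref{LemaRealPart}), which is nonzero for $b\ne0$ and $c$ small; hence $Df(\mathbf 0)$ is nonsingular and the implicit function theorem gives a unique equilibrium in a fixed neighbourhood of the origin, which by $\mathbb{Z}_N\times\mathbb{Z}_N$-equivariance is the origin.

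To locate the change of stability I would track the simple root $\lambda_{(r^*,s^*)+}$ of the block characteristic polynomial $\nu^2-(A-c)\nu+(b-cA)=0$, with $A=A(r^*,s^*)$. At $c=0$, $a=a_*$ one has $A=i\omega_0$ purely imaginary (with $\omega_0=\gamma\sin\theta_N$, $\delta\sin\theta_N$ or $(\gamma+\delta)\sin\theta_N$), so this root sits on the imaginary axis while, by Lemma~\ref{LemaRealPart}, every other eigenvalue has strictly negative real part. Differentiating the characteristic equation implicitly at this point gives $\partial_a\mathrm{Re}\,\lambda_{(r^*,s^*)+}=-\tfrac12\bigl(1+\tfrac{\omega_0}{\sqrt{\omega_0^2+4b}}\bigr)<0$ and $\partial_c\mathrm{Re}\,\lambda_{(r^*,s^*)+}=-\tfrac12\bigl(1-\tfrac{\omega_0}{\sqrt{\omega_0^2+4b}}\bigr)<0$ for $b>0$. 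Transversality in $a$ lets the implicit function theorem present the neutral-stability set as a smooth curve $a=\hat a(c)$ with $\hat a(0)=a_*$, and the two signs give $\hat a'(0)=-\frac{1-\omega_0/\sqrt{\omega_0^2+4b}}{1+\omega_0/\sqrt{\omega_0^2+4b}}<0$, so $\hat a<a_*$ for small $c>0$. The same signs show that for $a\ge a_*$ the critical pair has moved into the open left half-plane; together with the strict stability of the other modes (which persists by continuity on compact $a$-ranges, while $\mathrm{Re}\,\lambda\to-\infty$ as $a\to+\infty$) this gives asymptotic stability of the origin for $a>a_*$.

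For the Hopf statement, note that the simultaneous crossings of the $c=0$ picture split when $c>0$: since $\partial_c\mathrm{Re}\,\lambda$ is least negative for the mode of largest imaginary part, $\hat a(c)$ is the largest of the crossing values, so at $a=\hat a$ a single complex-conjugate pair lies on the imaginary axis and the bifurcation is non-resonant. For almost all $\gamma,\delta$ --- off the finitely many lines of Theorem~\ref{ThSimpleEigenvalues} and off the measure-zero set where the Golubitsky--Langford coefficient \cite{GoLangf} vanishes --- this pair is simple and the bifurcation non-degenerate, so Theorem~\ref{aHFG} provides a unique branch of periodic solutions. Its spatial symmetry $K$ equals the $c=0$ value of Table~\ref{tabelaBifurca}, because the critical isotypic component $\mathbf{V}_{(r^*,s^*)}\oplus\mathbf{V}_{(r^*,s^*)}$ is fixed by the discrete index $(r^*,s^*)$, which cannot change under a small perturbation. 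Finally, stability of the bifurcating orbit follows from exchange of stability: at $a=\hat a$ all eigenvalues transverse to the two-dimensional centre subspace have negative real part, so the orbit's nontrivial Floquet exponents are negative and it is orbitally asymptotically stable precisely when it is stable inside the centre manifold, i.e. when the branch is subcritical.

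The step demanding the most care is the direction of crossing. At the critical point $(A+c)^2-4b=-(\omega_0^2+4b)$ is real and negative, i.e. it lies on the branch cut of the principal square root used in \eqref{eigvals}, so that formula is not differentiable there and a naive differentiation can pick the wrong branch and hence the wrong sign for $\hat a'(0)$. The resolution is that the eigenvalue being tracked is a simple, non-colliding root of the block quadratic $\nu^2-(A-c)\nu+(b-cA)=0$, since its discriminant $(A+c)^2-4b$ is nonzero at the critical point; the root is therefore smooth in $(a,c)$ and its derivatives should be computed by implicit differentiation of the quadratic, which is what yields the signs used above and the conclusion $\hat a<a_*$.
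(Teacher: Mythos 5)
Your overall architecture matches the paper's: equilibrium via the implicit function theorem, stability of the origin for $a\geqslant a_*$, then identification of the first eigenvalue pair to cross as $a$ decreases. For the last step your method is genuinely different and, where it applies, correct: instead of the paper's explicit manipulation of real and imaginary parts (the quantities $p_1,p_2$ in Lemma~\ref{lemaStabilityOfOrigin} and the function $\psi(x)$ in Lemma~\ref{LemaDeltaPos}), you implicitly differentiate the block quadratic $\nu^2-(A-c)\nu+(b-cA)=0$ at $(a_*,0)$, obtain $\partial_a\mathrm{Re}\,\lambda=-\tfrac12(1+t)$ and $\partial_c\mathrm{Re}\,\lambda=-\tfrac12(1-t)$ with $t=\omega_0/\sqrt{\omega_0^2+4b}$, and order the crossing curves $\hat a(c)$ by $t$. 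I checked these derivatives; they are right, they give $\hat a<a_*$, and since $\hat a'(0)=-(1-t)/(1+t)$ is increasing in $t$ they single out the pair of largest imaginary part as the first to cross, which is exactly the conclusion of Lemmas~\ref{LemaDeltaNeg} and \ref{LemaDeltaPos}. Your remark about the branch cut of the principal square root, and the choice to work with the quadratic rather than formula \eqref{eigvals}, is a genuine point of care that the paper glosses over.

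The genuine gap is in the claim that the origin is asymptotically stable for \emph{all} $a\geqslant a_*$, which is a global statement in $a$ that your local derivative computation at $(a_*,0)$ plus ``continuity on compact $a$-ranges'' cannot deliver, because $[a_*,\infty)$ is not compact and at $c=0$ the supremum of $\mathrm{Re}\,\lambda_{(r,s)+}$ over large $a$ is $0$, not negative: from $\lambda_+\lambda_-=b$ and $\lambda_++\lambda_-=A\to-\infty$ one gets $\mathrm{Re}\,\lambda_{(r,s)+}\to 0^-$ as $a\to+\infty$ (and $\to -c$ for $c>0$), so your patch ``$\mathrm{Re}\,\lambda\to-\infty$ as $a\to+\infty$'' is false for the $+$ roots (it holds only for the $-$ roots). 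A continuity-in-$c$ argument therefore does not rule out a $+$ root being pushed across the axis at some large $a$. This is precisely what the paper's Lemma~\ref{lemaStabilityOfOrigin} is for: it proves \emph{non-perturbatively} that $x=\mathrm{Re}\,A(r,s)\leqslant 0$ and $c>0$ force $\mathrm{Re}\,\lambda_{(r,s)\pm}<0$, via the explicit inequalities on $p_1$ and $p_1^2-p_2$, and this holds uniformly for all $a\geqslant a_*$. You need either that computation or some equivalent global bound (e.g.\ a direct trace/determinant-type criterion for the $2\times 2$ complex block valid whenever $\mathrm{Re}\,A\leqslant 0<c$); the first-order perturbation data alone do not suffice.
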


\begin{proof}
For small values of $c$, the origin is locally the only equilibrium, by the arguments given in the proof of 
Corollary~\ref {CorolFirstHopf}.
In Lemma~\ref{lemaStabilityOfOrigin} below, we show that for small $c>0$ and  $a\ge a_*$, all the eigenvalues of the linearisation have negative real parts.
Hence, the origin is asymptotically stable.
When $a$ decreases from $a_*$ the real parts of some eigenvalues change their signs.
It was shown in the proof of Theorem~\ref{ThFirstHopf} that for $c=0$, there are several pairs of purely imaginary eigenvalues at $a=a_*$.
In  Lemmas~\ref{LemaDeltaNeg} and \ref{LemaDeltaPos} below, we show that, generically, for  small  $c>0$, when $a$ decreases from the value $a_*$ of Table~\ref{tabelaBifurca}, the first bifurcation at 
$a=\hat{a}<a_*$ takes place when a single pair of eigenvalues crosses the imaginary axis at a non-resonant Hopf bifurcation. We also identify the pair of eigenvalues for which the first bifurcation takes place.
\end{proof}

\begin{lema}\label{lemaStabilityOfOrigin}
For small values of $c>0$, if $b>0$, $\gamma\delta\neq 0$, let $a_*$ have the value of Table~\ref{tabelaBifurca}.
If $a\geqslant a_*$ then, for all $r,s$,  and for$\varepsilon_1=\pm 1$, we have  $\mathrm{Re}\lambda_{(r,s)\varepsilon_1}<0$, and the origin is asymptotically stable.
\end{lema}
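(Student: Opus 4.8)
The plan is to argue entirely from the characteristic polynomial of the $2\times2$ block $D+\omega^rE+\omega^sF$ furnished by Theorem~\ref{ThEigs}. Writing $A=A(r,s)=-a+\gamma(1-\omega^r)+\delta(1-\omega^s)$ as in Lemma~\ref{LemaRealPart}, a short computation gives this polynomial as $\lambda^2+(c-A)\lambda+(b-Ac)$, with discriminant $(A+c)^2-4b$, so that $\lambda_{(r,s)\pm}=\tfrac12\big[(A-c)\pm\sqrt{(A+c)^2-4b}\big]$, in agreement with \eqref{eigvals}. The first and purely qualitative step is to note that $A$ is independent of $c$ and that $\mu:=\mathrm{Re}\,A=-a+\gamma\big(1-\cos\tfrac{2\pi r}{N}\big)+\delta\big(1-\cos\tfrac{2\pi s}{N}\big)\le0$ for all $(r,s)$ as soon as $a\ge a_*$. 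This is a case check over the four sign combinations of $(\gamma,\delta)$ recorded in Table~\ref{tabelaBifurca}: it uses $1-\cos\tfrac{2\pi\alpha}{N}\in[0,\,1-\cos\theta_N]$ together with the extremal property $\cos\theta_N\le\cos\tfrac{2\pi\alpha}{N}$ already exploited in the proof of Theorem~\ref{ThFirstHopf}.

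With $\mu\le0$ in hand, the root $\lambda_{(r,s)-}$ is disposed of at once: the principal square root has non-negative real part, so $\mathrm{Re}\,\lambda_{(r,s)-}=\tfrac12\big[(\mu-c)-\mathrm{Re}\sqrt{(A+c)^2-4b}\big]\le\tfrac12(\mu-c)<0$ for every $c>0$. The substance of the proof is the root $\lambda_{(r,s)+}$, for which I must establish $\mathrm{Re}\sqrt{W}<c-\mu$, where $W=(A+c)^2-4b$ and $c-\mu>0$. Setting $\nu=\mathrm{Im}\,A$ and applying the square-root formula \eqref{RaizEta} in the form $(\mathrm{Re}\sqrt W)^2=\tfrac12(|W|+\mathrm{Re}\,W)$, the desired inequality is equivalent to $|W|<R$ with $R:=2(c-\mu)^2-\mathrm{Re}\,W=c^2-6c\mu+\mu^2+\nu^2+4b>0$. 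As both quantities are non-negative this amounts to $R^2-|W|^2>0$, and expanding $|W|^2=\big[(\mu+c)^2-\nu^2-4b\big]^2+4(\mu+c)^2\nu^2$ yields the identity
$$
R^2-|W|^2=16(\mu+c)^2(b-c\mu)-16c\mu\nu^2-64bc\mu+64c^2\mu^2 .
$$
Every summand is non-negative when $\mu\le0$, $b>0$, $c>0$ (using $b-c\mu\ge b>0$), and they do not vanish simultaneously: the third term is positive if $\mu<0$, while the first equals $16bc^2>0$ if $\mu=0$. Hence $R^2-|W|^2>0$, i.e. $\mathrm{Re}\,\lambda_{(r,s)+}<0$.

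Combining the two cases shows $\mathrm{Re}\,\lambda_{(r,s)\varepsilon_1}<0$ for all $(r,s)$ and $\varepsilon_1=\pm1$ whenever $a\ge a_*$, whence the origin is asymptotically stable by the principle of linearised stability. I expect the only delicate point to be the algebraic identity for $R^2-|W|^2$: the $16b^2$ terms must cancel and the $b$-linear terms must recombine so that the difference splits into the four manifestly non-negative pieces displayed above; a sign slip there would wreck the conclusion. Finally I would remark that this reasoning uses only $\mu\le0$, $b>0$ and $c>0$, so no smallness of $c$ is actually needed---the hypothesis ``$c>0$ small'' in the statement simply inherits the standing setting of Proposition~\ref{propCpositive}.
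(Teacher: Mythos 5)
Your proposal is correct and follows essentially the same route as the paper: reduce to $\mathrm{Re}\,A(r,s)\leqslant 0$ for $a\geqslant a_*$, then show $\mathrm{Re}\sqrt{(A+c)^2-4b}<c-\mathrm{Re}\,A$ by squaring; your $R$ and $|W|^2$ are exactly the paper's $p_1$ and $p_2$, and your expansion of $R^2-|W|^2$ agrees with the paper's polynomial $p_1^2-p_2$, which the paper likewise checks to be positive for $x\leqslant 0$, $c>0$, $b>0$. Your grouping into four manifestly non-negative summands, and the observation that no smallness of $c$ is needed, are minor (and welcome) refinements rather than a different argument.
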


\begin{proof}
For $\varepsilon_1=\pm 1$, the eigenvalues $\lambda_{(r,s)\epsilon_1}$ have the form
\begin{equation}\label{eq 1 small c positive}
2\lambda_{(r,s)\epsilon_1}=A(r,s)-c+\epsilon_1\sqrt{\left(A(r,s)+c\right)^2-4b}.
\end{equation}
Using \eqref{RaizEta}
and writing $A(r,s)=x+iy$, we have
\begin{equation}\label{eq 2 small c positive}
(A+c)^2=(c+x)^2-y^2+2i(cy+xy).
\end{equation}
Then $\mathrm{Re}\lambda_{(r,s)-}\leqslant \mathrm{Re}\lambda_{(r,s)+}$, 
and $\mathrm{Re}\lambda_{(r,s)+}\leqslant 0$
if and only if
$$
\mathrm{Re}\sqrt{\left(A(r,s)+c\right)^2-4b}\leqslant(x+c)
$$
This never happens if $c-x<0$, and in this case we also have $2\mathrm{Re}\lambda_{(r,s)-}\leqslant 0$. 
If $c-x\geqslant 0$ let
$$
p_1=(c-x)^2-4xc+y^2+4b \quad\mbox{and}\quad
p_2=\left[(c+x)^2-y^2-4b\right]^2+4y^2(c+x)^2
$$
with $p_1^2-p_2=16\left[-x^3c+(b+2c^2)x^2-(2cb+cy^2+c^3)x+c^2b\right]$.

With this notation, $\mathrm{Re}\lambda_{(r,s)+}\leqslant 0$ if and only if $p_1>0$ and
$$
\left[(c+x)^2-y^2-4b\right]^2+4y^2(c+x)^2\leqslant\left[(c-x)^2-4xc+y^2+4b\right]^2.
$$ 
We have the following cases:
\begin{enumerate}
\item 
if $x<0$, $c>0$ then $p_1>0$ and $p_1^2-p_2>0$ and so $\mathrm{Re}\lambda_{(r,s)^+}<0$;
\item 
if $x=0$, $c>0$ then $p_1=y^2+4b+c^2>0$ and $\displaystyle{\frac{p_1^2-p_2}{16}}=c^2b>0$ and therefore $\mathrm{Re}\lambda_{(r,s)^+}<0;$
\item 
at $x=c$ we have $p_1^2-p_2=-16c^2y^2<0$,  so $(p_1^2-p_2)$ changes sign for some $x_*$,  $0<x_*<c$.
\end{enumerate}
This completes the proof, since for $a\geqslant a_*$ we have  $x=\mathrm{Re}A(r,s)\leqslant 0$, as in the proof of Theorem~\ref{ThFirstHopf}.
\end{proof}

\begin{lema}\label{LemaDeltaNeg}
For small values of $c>0$, if $b>0$, $\gamma>0$,
$\delta<0$ and if $a-a*<0$ is small, then all the eigenvalues of the linearisation of  \eqref{nFHN_2d} around the origin have real parts smaller than the real part of $\lambda_{(r_+,0)+}$
\end{lema}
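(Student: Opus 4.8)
The goal is to show that among all eigenvalues $\lambda_{(r,s)\pm}$, the one with the largest real part, as $a$ decreases through $a_*$ with $\gamma>0$, $\delta<0$ and $c>0$ small, is $\lambda_{(r_+,0)+}$. My plan is to exploit the explicit dependence of the eigenvalues on $A(r,s)=-a+\gamma(1-\omega^r)+\delta(1-\omega^s)$ established in Theorem~\ref{ThEigs} and Lemma~\ref{lemaStabilityOfOrigin}. First I would record that $\mathrm{Re}\,A(r,s)=-a+\gamma(1-\cos\frac{2\pi r}{N})+\delta(1-\cos\frac{2\pi s}{N})$, and that since $\gamma>0$, $\delta<0$, the real part is largest precisely when $\cos\frac{2\pi r}{N}$ is smallest (i.e. $r=r_{\pm}$, giving $\cos\theta_N$) and $\cos\frac{2\pi s}{N}$ is largest (i.e. $s=0$). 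This pins down $(r_+,0)$ and $(r_-,0)$ as the modes whose $A$ has the greatest real part, and at $a=a_*$ this value is $0$.

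\textbf{From $A$ to the eigenvalues at $c=0$.}
At $c=0$, Theorem~\ref{ThFirstHopf} already shows $A(r_{\varepsilon_2},0)=i\varepsilon_2\gamma\sin\theta_N$ is purely imaginary with $\mathrm{Re}\,\lambda_{(r_{\pm},0)+}=0$, while all other modes have $\mathrm{Re}\,A(r,s)<0$ strictly, hence $\mathrm{Re}\,\lambda_{(r,s)+}<0$ by Lemma~\ref{LemaRealPart}. The next step is to understand how $\mathrm{Re}\,\lambda_{(r,s)+}$ depends on $\mathrm{Re}\,A$ and $\mathrm{Im}\,A$ monotonically: using formula \eqref{Real}, I would argue that for fixed $c$, $\mathrm{Re}\,\lambda_{(r,s)+}$ is an increasing function of $\mathrm{Re}\,A$ and, among modes with $\mathrm{Re}\,A$ close to its maximum, the tie is broken by $\mathrm{Im}\,A$ through the $|A^2-4b|$ term. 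Since at the critical modes $\mathrm{Re}\,A=0$, the two candidates $\lambda_{(r_+,0)+}$ and $\lambda_{(r_-,0)+}$ both sit on the imaginary axis at $c=0$; the distinction $\mathrm{Im}\,\lambda_{(r_+,0)+}>\mathrm{Im}\,\lambda_{(r_-,0)+}>0$ from Theorem~\ref{ThFirstHopf} identifies $\lambda_{(r_+,0)+}$ as the relevant one.

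\textbf{Perturbing in $c$.}
The core of the lemma is the perturbation to small $c>0$. Here I would use \eqref{eq 1 small c positive}, namely $2\lambda_{(r,s)+}=A-c+\sqrt{(A+c)^2-4b}$, and treat $\mathrm{Re}\,\lambda_{(r,s)+}$ as a smooth function of the pair $(\mathrm{Re}\,A,\mathrm{Im}\,A,c)$ away from the branch locus. The eigenvalues are simple for generic $\gamma,\delta$ by Theorem~\ref{ThSimpleEigenvalues}, so each $\lambda_{(r,s)+}$ depends smoothly on $c$ near $c=0$. Since at $c=0$ the mode $(r_+,0)$ strictly dominates all modes other than $(r_-,0)$ (which have strictly negative real part with a uniform gap over the finite index set), continuity guarantees that for $c$ small the same strict inequalities persist for all $(r,s)\ne(r_+,0),(r_-,0)$. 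It then remains only to separate $(r_+,0)$ from $(r_-,0)$ for $c>0$: I would compute $\partial_c\,\mathrm{Re}\,\lambda_{(r_{\pm},0)+}$ at $c=0$ from \eqref{eq 1 small c positive}, where $A=i\varepsilon_2\gamma\sin\theta_N$ is purely imaginary, and show the derivative is strictly larger for $r_+$ than for $r_-$ because of the sign of $\mathrm{Im}\,A$ relative to the branch of the square root.

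\textbf{Main obstacle.}
The delicate point is exactly this last separation of the two complex-conjugate-related critical modes $(r_\pm,0)$ under the $c$-perturbation: at $c=0$ they share the same zero real part, so the leading-order comparison must come from the first-order term in $c$, and I must verify that the derivative $\partial_c\,\mathrm{Re}\,\lambda_{(r_+,0)+}$ strictly exceeds $\partial_c\,\mathrm{Re}\,\lambda_{(r_-,0)+}$ rather than merely being equal. This requires carefully differentiating the square-root branch in \eqref{eq 1 small c positive} at a purely imaginary base point and checking that the sign of $\mathrm{Im}\,A=\varepsilon_2\gamma\sin\theta_N$ (with $\sin\theta_N>0$, $\gamma>0$) feeds into $\mathrm{Re}$ of the derivative with the right sign. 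Once that first-order strict inequality is in hand, the uniformity over the finite set of remaining modes is routine, and the conclusion follows for all sufficiently small $c>0$.
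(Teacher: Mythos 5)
There are two genuine gaps in your proposal, both coming from the structure of the critical eigenvalues rather than from technique.

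First, the step you single out as the ``main obstacle'' --- separating $\mathrm{Re}\,\lambda_{(r_+,0)+}$ from $\mathrm{Re}\,\lambda_{(r_-,0)+}$ at first order in $c$ --- cannot be carried out, because the separation you are after does not exist. Since $A(r_-,0)=\overline{A(r_+,0)}$ and the principal square root satisfies $\sqrt{\overline{w}}=\overline{\sqrt{w}}$ off the nonpositive real axis, formula \eqref{eq 1 small c positive} gives $\lambda_{(r_-,0)+}=\overline{\lambda_{(r_+,0)+}}$ whenever the radicand has nonzero imaginary part, i.e. whenever $(\mathrm{Re}\,A+c)\,\mathrm{Im}\,A\neq 0$, which holds for $c>0$ and $a<a_*$. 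The two eigenvalues therefore have \emph{identical} real parts; your derivative computation at $c=0$ would return equal values, not a strict ordering. This is not a defect of the lemma: the paper explicitly treats $\lambda_{(r_+,0)+}=\overline{\lambda_{(r_-,0)+}}$ as the single conjugate pair that crosses the axis (exactly what a nonresonant Hopf bifurcation needs), and the conclusion must be read as dominance over the remaining eigenvalues. The different pairing $\lambda_{(r_+,0)+}=\overline{\lambda_{(r_-,0)-}}$ quoted from Theorem~\ref{ThFirstHopf} holds only at the branch point $c=0$, $a=a_*$, where the radicand is a negative real number.

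Second, your continuity argument assumes that every eigenvalue other than $\lambda_{(r_\pm,0)+}$ has real part bounded away from zero at $c=0$, $a=a_*$, ``with a uniform gap over the finite index set''. That is false for $\lambda_{(r_+,0)-}$ and $\lambda_{(r_-,0)-}$: at $c=0$, $a=a_*$ the radicand $A^2-4b=-\gamma^2\sin^2\theta_N-4b$ is a negative real, so \emph{all four} eigenvalues $\lambda_{(r_\pm,0)\pm}$ are purely imaginary (this is precisely why Theorem~\ref{ThFirstHopf} discusses two independent Hopf branches and possible resonances). There is no gap to propagate for these two. The paper disposes of them by the explicit inequality $\mathrm{Re}\,\lambda_{(r_{\varepsilon_2},0)-}<\mathrm{Re}\,\lambda_{(r_{\varepsilon_2},0)+}$, which holds for $c>0$ because the radicand is then no longer a nonpositive real and so $\mathrm{Re}\sqrt{(A+c)^2-4b}>0$. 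You need this extra step; continuity alone does not cover the $-$ branch of the critical modes. The remainder of your outline (locating the maximising modes through $\mathrm{Re}\,A(r,s)$, and continuity for the genuinely subcritical modes) is sound and agrees with the paper.
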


\begin{proof}
It was shown in the proof of Theorem~\ref{ThFirstHopf} that at $c=0$, $a=a_*$, the eigenvalues 
$\lambda_{(r_{\varepsilon_2},0)\varepsilon_1}$, with $\varepsilon_1=\pm 1$with $\varepsilon_2=\pm 1$, are purely imaginary, and all other eigenvalues have negative real parts.
From the expression \eqref{eq 1 small c positive} it follows that 
$$
2\mathrm{Re}\lambda_{(r_{\varepsilon_2},0)\epsilon_1}=
\mathrm{Re}A(r_{\varepsilon_2},0)-c+\epsilon_1\mathrm{Re}\sqrt{\left(A(r_{\varepsilon_2},0)+c\right)^2-4b}
$$
hence  $\mathrm{Re}\lambda_{(r_{\varepsilon_2},0){-}}<\mathrm{Re}\lambda_{(r_{\varepsilon_2},0){+}}$.
Let $\hat{a}$ be the value of $a$ for which the pair 
$\lambda_{(r_{+},0){+}}=\overline{\lambda_{(r_{-},0){+}}}$ first crosses the imaginary axis.
Since $\mathrm{Re}A(r_{\varepsilon_2},0)$ decreases with $a$, then $\hat{a}<a_*$.
The estimates above show that at $a=\hat{a}$, the second pair $\lambda_{(r_{+},0){-}}=\overline{\lambda_{(r_{-},0){-}}}$ still has negative real part.
For small $c$, the other eigenvalues still have negative real parts at $\hat{a}$, by continuity.
\end{proof}

\begin{lema}\label{LemaDeltaPos}
For small values of $c>0$, if $b>0$, $\gamma>0$,
$\delta>0$ and if $a-a*<0$ is small, then all the eigenvalues of the linearisation of  \eqref{nFHN_2d} around the origin have real parts smaller than the real part of $\lambda_{(r_+,r_+)+}$
\end{lema}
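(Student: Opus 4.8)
The plan is to establish Lemma~\ref{LemaDeltaPos} by adapting, almost verbatim, the argument already carried out for Lemma~\ref{LemaDeltaNeg}, since the $\gamma>0$, $\delta>0$ case differs only in which mode is critical. First I would recall from the proof of Theorem~\ref{ThFirstHopf} the situation at $c=0$, $a=a_*=(\gamma+\delta)(1-\cos\theta_N)$: there the four eigenvalues $\lambda_{(r_{\varepsilon_2},r_{\varepsilon_3})\varepsilon_1}$ with $\varepsilon_1=\pm1$ and $(\varepsilon_2,\varepsilon_3)=(+1,+1)$ are purely imaginary, together with their conjugates coming from $(\varepsilon_2,\varepsilon_3)=(-1,-1)$, while \emph{all} other eigenvalues have strictly negative real part. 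The key ordering established there, $\mathrm{Im}\lambda_{(r_+,r_+)+}>\mathrm{Im}\lambda_{(r_{\varepsilon_2},r_{\varepsilon_3})+}$ for $(\varepsilon_2,\varepsilon_3)\ne(+1,+1)$, singles out $\lambda_{(r_+,r_+)+}$ as the largest-frequency critical eigenvalue.

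Next I would invoke the expression~\eqref{eq 1 small c positive} from Lemma~\ref{lemaStabilityOfOrigin}, which writes each eigenvalue for $c>0$ as
$$
2\lambda_{(r,s)\varepsilon_1}=A(r,s)-c+\varepsilon_1\sqrt{(A(r,s)+c)^2-4b}.
$$
Applying this to the critical modes gives
$$
2\mathrm{Re}\,\lambda_{(r_{\varepsilon_2},r_{\varepsilon_3})\varepsilon_1}=
\mathrm{Re}\,A(r_{\varepsilon_2},r_{\varepsilon_3})-c+\varepsilon_1\,\mathrm{Re}\sqrt{(A(r_{\varepsilon_2},r_{\varepsilon_3})+c)^2-4b},
$$
from which the separation $\mathrm{Re}\,\lambda_{(r_{\varepsilon_2},r_{\varepsilon_3})-}<\mathrm{Re}\,\lambda_{(r_{\varepsilon_2},r_{\varepsilon_3})+}$ is immediate, exactly as in Lemma~\ref{LemaDeltaNeg}. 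Because $\mathrm{Re}\,A(r,s)$ decreases with $a$, I would then let $\hat a$ be the value of $a$ at which the pair $\lambda_{(r_+,r_+)+}=\overline{\lambda_{(r_-,r_-)+}}$ first crosses the imaginary axis, and argue $\hat a<a_*$. The remaining point is to check that at $a=\hat a$ every other eigenvalue still has strictly smaller real part: the three companion branches $\lambda_{(r_{\varepsilon_2},r_{\varepsilon_3})+}$ with $(\varepsilon_2,\varepsilon_3)\ne(+1,+1)$ are controlled by the imaginary-part ordering inherited from the $c=0$ analysis (which, by continuity of the square root in~\eqref{eq 1 small c positive}, persists for small $c$), while the ``minus'' branches are handled by the strict inequality $\mathrm{Re}\,\lambda_{\cdot-}<\mathrm{Re}\,\lambda_{\cdot+}$ above; all eigenvalues outside the critical isotypic component remain bounded away from the axis by continuity in $c$.

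The main obstacle I anticipate is not any single estimate but the bookkeeping: here the critical centre subspace is four-dimensional rather than two-dimensional, so I must track four eigenvalue branches (plus conjugates) instead of two, and I must verify that the largest real part among the three non-leading branches is genuinely dominated by $\mathrm{Re}\,\lambda_{(r_+,r_+)+}$ uniformly for $c$ small. The delicate part is that the imaginary-part strict inequalities from $c=0$ translate into real-part separations only after perturbing by $c$, so I would make the continuity argument explicit: since the eigenvalues depend smoothly on $(a,c)$ via~\eqref{eq 1 small c positive} and the strict inequalities are open conditions, the ordering $\mathrm{Re}\,\lambda_{(r_+,r_+)+}>\mathrm{Re}\,\lambda_{(r_{\varepsilon_2},r_{\varepsilon_3})\varepsilon_1}$ for all $(\varepsilon_2,\varepsilon_3,\varepsilon_1)\ne(+1,+1,+1)$ holds at $(a_*,0)$ and hence on a neighbourhood, which contains the relevant $(\hat a,c)$. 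This guarantees that $\lambda_{(r_+,r_+)+}$ is the first pair to cross, completing the proof.
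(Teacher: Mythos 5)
There is a genuine gap at the heart of your argument. The decisive step of the lemma is to show that, for small $c>0$, the pair $\lambda_{(r_+,r_+)+}=\overline{\lambda_{(r_-,r_-)+}}$ dominates (and hence crosses the imaginary axis before) the companion pair $\lambda_{(r_+,r_-)+}=\overline{\lambda_{(r_-,r_+)+}}$. You propose to get this by continuity: the ordering of real parts ``holds at $(a_*,0)$ and hence on a neighbourhood''. But at $(a,c)=(a_*,0)$ \emph{all four} of these eigenvalues are purely imaginary, so their real parts are all equal to zero and the inequality you want to propagate is not strict at the base point --- it is an equality. Openness of strict inequalities therefore gives you nothing here. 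What you actually have at $c=0$ is a strict ordering of the \emph{imaginary} parts, $\mathrm{Im}\,\lambda_{(r_+,r_+)+}>\mathrm{Im}\,\lambda_{(r_+,r_-)+}$, and there is no soft continuity principle converting that into a strict ordering of real parts for $c>0$; this is precisely the point your ``delicate part'' remark identifies but does not resolve. (This is also why the analogous step in Lemma~\ref{LemaDeltaNeg} is easier: there only one conjugate pair of ``$+$'' eigenvalues is critical at $a_*$, so no such comparison is needed, and the adaptation cannot be ``almost verbatim''.)

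The paper closes this gap with a quantitative computation rather than a perturbation of strict inequalities. Writing $A(r_{\varepsilon_2},r_{\varepsilon_3})=x+iy$, it derives from \eqref{eq 1 small c positive} the exact condition for $\mathrm{Re}\,\lambda=0$, namely $y^2=\psi(x)$ with
$$
\psi(x)=\left(\frac{b}{cx}-1\right)(c-x)^2 ,
$$
and observes that $\psi$ decreases monotonically from $+\infty$ on $0<x<c$ (for $c^2<b$). Since $x=a_*-a$ is the same for all four sign choices and increases from $0$ as $a$ decreases, while $y=(\varepsilon_2\gamma+\varepsilon_3\delta)\sin\theta_N$ is independent of $a$ and satisfies $|y(r_+,r_-)|<y(r_+,r_+)$, the branch with the largest $|y|$ meets the crossing locus at the smallest $x$, i.e.\ first as $a$ decreases. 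Your proof needs this (or an equivalent first-order expansion of $\mathrm{Re}\,\lambda_{(r_{\varepsilon_2},r_{\varepsilon_3})+}$ in $c$ at $(a_*,0)$ showing the crossing value $\hat a$ is an increasing function of $|y|$); without it the claim that $\lambda_{(r_+,r_+)+}$ is the first pair to cross is unsupported. The parts of your argument that are fine are the separation $\mathrm{Re}\,\lambda_{\cdot-}<\mathrm{Re}\,\lambda_{\cdot+}$ and the statement that eigenvalues outside the critical isotypic component stay in the open left half-plane by continuity; note also that the four critical ``$+$'' branches form only two conjugate pairs, so only one comparison is actually required.
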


\begin{proof}
As in Lemma~\ref{LemaDeltaNeg} we use \eqref{eq 1 small c positive} to show that
$\mathrm{Re}\lambda_{(r_{\varepsilon_2},r_{\varepsilon_3}){-}}<\mathrm{Re}\lambda_{(r_{\varepsilon_2},r_{\varepsilon_3}){+}}$
at $c>0$. 
It remains to compare the real parts of the two pairs
$\lambda_{(r_+,r_+)+}=\overline{\lambda_{(r_-,r_-)+}}$ 
and $\lambda_{(r_+,r_-)+}=\overline{\lambda_{(r_-,r_+)+}}$.
To do this, we write $A(r_{\varepsilon_2},r_{\varepsilon_3})=
x+iy$, where $x,y\in\RR$ and obtain conditions on $x$ and $y$ ensuring that the eigenvalue is purely imaginary.
Then we evaluate these conditions on the expressions for $x$ and $y$ to obtain the result.

From \eqref{eq 1 small c positive} and \eqref{eq 2 small c positive} we get that $\mathrm{Re}\lambda=0$ 
if and only if
$$
\left( 2(c-x)^2-X\right)^2=X^2+4(x+c)^2y^2
\qquad\mbox{for}\qquad
X=(x+c)^2-y^2-4b
$$
and this is equivalent to 
$$
(c-x)^2=X+\frac{(c+x)^2}{(c-x)^2}y^2 =(x+c)^2-y^2-4b+\frac{(c+x)^2}{(c-x)^2}y^2 
$$
which may be rewritten as:
$$
(c-x)^2-(x+c)^2+4b=y^2 \left[\frac{(c+x)^2}{(c-x)^2}-1 \right] .
$$
This may be solved for $y^2$ to yield
\begin{equation}\label{y2PsiDex}
y^2=\frac{(b-cx)(c-x)^2}{cx}=\left(\frac{b}{cx}-1\right)(c-x)^2=\psi(x)
\end{equation}
and note that, for $c>0$ and if $c^2<b$, then $\psi(x)>0$ for $0<x<c$, and in this interval $\psi(x)$ is monotonically decreasing.

Now consider the expressions of the real and imaginary parts of  
$A(r_{\varepsilon_2},r_{\varepsilon_3})$. The real part
$x(r_{\varepsilon_2},r_{\varepsilon_3})$ satisfies
$$
x(r_{\varepsilon_2},r_{\varepsilon_3})=-a+(\gamma+\delta)\left(1-\cos \frac{(N-1)\pi}{N}\right)=-a+a_* 
$$
hence, $x$ does not depend on $\varepsilon_2$ nor on $\varepsilon_3$, and  $x>0$ for $a<a_*$.

On the other hand,
the imaginary part $y(r_{\varepsilon_2},r_{\varepsilon_3})$ is
$$
y(r_{\varepsilon_2},r_{\varepsilon_3})=\left(\varepsilon_2\gamma+\varepsilon_3\delta\right)\sin\frac{(N-1)\pi}{N} 
$$
thus $y$ does not depend on $a$, and since  $\sin\frac{(N-1)\pi}{N}>0$ then,
$$
\left| y(r_{+},r_{-})\right|< y(r_{+},r_{+})  \qquad\mbox{and}\qquad  y(r_{+},r_{+}) >0
.
$$
 
Finally, when  $a$ decreases from $a_*$, then  $x$ increases from zero, and hence $\psi(x)$ decreases from $+\infty$. The first value of $y$ to satisfy \eqref{y2PsiDex} will be $y(r_{+},r_{+}) $ since it has the largest absolute value. Hence the first pair of eigenvalues  to cross the imaginary axis will be 
$\lambda_{(r_+,r_+)\varepsilon_1}=\overline{\lambda_{(r_-,r_-)\varepsilon_1}}$, as required, while the real parts of all other eigenvalues,
including $\lambda_{(r_+,r_-)\varepsilon_1}=\overline{\lambda_{(r_-,r_+)\varepsilon_1}}$, are still negative.
\end{proof}

Note that from Lemmas~\ref{LemaDeltaNeg} and \ref{LemaDeltaPos}, it follows that the first bifurcating eigenvalue for $c>0$ is precisely the non-resonant eigenvalue for $c=0$, that has the largest imaginary part.

\paragraph{\bf Acknowledgements}
The research of both authors at Centro de Ma\-te\-m\'a\-ti\-ca da Universidade do Porto (CMUP)
 had financial support from
 the European Regional Development Fund through the programme COMPETE and
 from  the Portuguese Government through the Fun\-da\-\c c\~ao para
a Ci\^encia e a Tecnologia (FCT) under the project
 PEst-C/MAT/UI0144/2011.
 A.C. Murza was also supported by the grant
SFRH/ BD/ 64374/ 2009 of FCT.

\end{document}